\def\Id{\mathop{\rm Id}\nolimits}
\def\Id{\mathop{\rm Id}\nolimits}
\def\b{\beta}
\def\d{\delta}
\def\D{\Delta}
\def\s{\sigma}
\def\ve{\varepsilon}
\def\vp{\varphi}
\def\ot{\otimes}
\def\ol{\overline}
\def\ra{\rightarrow}
\def\lt{\triangleleft}
\def\0D{\Delta^{(0)}}
\def\1D{\Delta^{(1)}}
\def\Co{\,\square\,}
\newtheorem{theorem}{Theorem}[section]
\newtheorem{remark}[theorem]{Remark}
\newtheorem{lemma}[theorem]{Lemma}
\newtheorem{example}[theorem]{Example}
\newtheorem{definition}[theorem]{Definition}
\def\build#1_#2^#3{\mathrel{
\mathop{\kern 0pt#1}\limits_{#2}^{#3}}}
\newcommand{\ns}[1]{~\hspace{-4pt}_{_{{<#1>}}}}
\newcommand{\ps}[1]{~\hspace{-4pt}^{^{(#1)}}}
\numberwithin{equation}{section}
 \newcommand{\ie}{{\it i.e.\/}\ }
\def\b{\beta}
\def\d{\delta}
\def\s{\sigma}
\def\ve{\varepsilon}
\def\vp{\varphi}
\def\D{\Delta}
\def\ot{\otimes}
\def\part{\partial}
\def\ra{\rightarrow}
\def\text{\hbox}
\def\ot{\otimes}
\def\ra{\rightarrow}
\def\wh{\widehat}
\def\Id{\mathop{\rm Id}\nolimits}
\def\build#1_#2^#3{\mathrel{
\mathop{\kern 0pt#1}\limits_{#2}^{#3}}}
\numberwithin{equation}{section}
\newcommand{\comment}[1]{\relax}
\numberwithin{equation}{section}
\begin{document}
\title{\bf  Hopf Galois (Co)extensions In Noncommutative Geometry }

\author{Mohammad Hassanzadeh}

\address{Institut des Hautes \'Etudes Scientifiques, IHES, Bures Sur Yvette, France.\\
}
\email{mhassanz@ihes.fr}
\maketitle

\begin{abstract}
We introduce an alternative proof, with the use of tools and notions for Hopf algebras, to show  that Hopf Galois coextensions of coalgebras are the sources of stable anti Yetter-Drinfeld modules. Furthermore we show that two natural cohomology theories related to a Hopf Galois coextension are isomorphic.
\end{abstract}

\section*{Introduction}

The general definition of  Hopf Galois extensions   was introduced by Kreimer and Takeuchi \cite{kt} around the same time when noncommutative geometry \cite{NCG} started to develop with works of Alain Connes in $1980$. In fact Hopf Galois extensions are noncommutative analogue of  affine torsors  and principal
bundles. Descent data related to a Hopf Galois extension has its roots in works of Grothendieck in algebraic geometry \cite{gr}.
Basically there are two well-known homology theories which are related to a Hopf Galois extension $A(B)_H$. One is the relative cyclic homology of the algebra extension  $B\subseteq A$, and another one is the Hopf cyclic homology of the Hopf algebra $H$ involved in the Hopf Galois extension. It is shown  in \cite{JS}  that these two homology theories are isomorphic where the Hopf cyclic homology of Hopf algebra $H$ has  coefficients in a stable anti Yetter-Drinfeld (SAYD) module. Not only this isomorphism is given by the canonical isomorphism of the Hopf Galois extension, but also the SAYD coefficients, which is a module-comodule over $H$,  is constructed by  this canonical map.
A great idea here is that the cyclic (co)homology of (co)algebra (co)extensions which is not easy to compute  is isomorphic to the Hopf cyclic (co)homology of the Hopf algebra of the (co)extension  which can be computed more easily.
The dual notion of Hopf Galois coextensions is introduced by Schneider in \cite{schneider} and  can be viewed as a noncommutative
generalization of the theory of quotients of formal schemes under free actions of formal group schemes \cite{schneider2}.

In this paper   we recall  the basics of Hopf Galois (co)extensions. Also we study the module-comodule  and specially stable anti Yetter-Drinfeld module structures over the Hopf algebra involved in a Hopf Galois (co)extension. Furthermore  we study the related (co)homology theories to a Hopf Galois (co)extension and finally we observe that these (co)homology theories are isomorphic. More precisely, in Section $1$  we recall the results of Jara and Stefan in \cite{JS} for Hopf Galois extensions and   in Section $2$  we introduce an alternative proof for the dual case of Hopf Galois coextensions using the tools and notions for Hopf algebras.
Although the main results of the second section of this paper have been already proved for a general case of Equivaraint Hopf Galois coextensions for $\times$-Hopf coalgebras in \cite{hr2},  a direct proof for the case of Hopf algebras using  the related concepts and notions, which has not been appeared  in the literature, has its own advantages.

\medskip
\textbf{Acknowledgments}: The author would like to thank the  Institut des Hautes \'Etudes Scientifiques, IHES, for its hospitality and financial supports during his visit when the whole work was accomplished.

\medskip

\textbf{Notations}: In this paper we denote a Hopf algebra by $H$ and  its counit   by $\ve$. We use the Sweedler summation notation $\Delta(h)= h\ps{1}\ot h\ps{2}$ for the coproduct of a Hopf algebra. Furthermore the summation notations $\blacktriangledown(h)= h\ns{-1}\ot h\ns{0}$ and $\blacktriangledown(h)= h\ns{0}\ot h\ns{1}$  are used for the left and right coactions of a coalgebra, respectively.

\tableofcontents

\section{Hopf cyclic cohomology}
In this section we review the basics of (co)cyclic modules. We explain cyclic duality  to obtain a cyclic module from a cocyclic module and vice versa. As an example we study the Hopf cyclic (co)homology with coefficients in a stable anti Yetter-Drinfeld module.
\subsection{Cyclic modules and cyclic duality}
     A cosimplicial module \cite{NCG}, \cite{Loday}   contains  $\mathbb{C}$-modules   $C^{n}$, $n\geq 0$, with  $\mathbb{C}$-module maps    $\delta_{i}:C^{n}\longrightarrow C^{n+1}$ called cofaces, and  $\sigma_{i}:C^{n}\longrightarrow C^{n-1}$ called codegeneracies  satisfying the following cosimplicial relations;
\begin{eqnarray}
\begin{split}
& \delta_{j}  \delta_{i} = \delta_{i} \delta_{j-1},     ~~\text{ if} \quad\quad i <j,\\
& \sigma_{j}  \sigma_{i} = \sigma_{i} \sigma_{j+1},     \text{    if} \quad\quad i \leq j,\\
&\sigma_{j} \delta_{i} =   \label{rel1}
 \begin{cases}
\delta_{i} \sigma_{j-1},   \quad
 &\text{if} \quad\text{$i<j$,}\\
\text{Id},   \quad\quad
 &\text{if} \quad   \text{$i=j$ or $i=j+1$,}\\
\delta_{i-1} \sigma_{j},  \quad
 &\text{if} \quad \text{$i>j+1$}.
\end{cases}
\end{split}
\end{eqnarray}

A cocyclic module  is a cosimplicial module  with extra morphisms  $\tau :C^n\longrightarrow C^n$ which are called cocyclic maps such that the following extra commutativity relations hold.
\begin{eqnarray}
\begin{split}
&\tau\delta_{i}=\delta_{i-1} \tau, \hspace{43 pt} 1\le i\le  n+1,\\
&\tau \delta_{0} = \delta_{n+1}, \hspace{43 pt} \tau \sigma_{i} = \sigma _{i-1} \tau,  \hspace{33 pt} 1\le i\le n,\\ \label{rel2}
&\tau \sigma_{0} = \sigma_{n} \tau^2, \hspace{43 pt} \tau^{n+1} = \Id.
\end{split}
\end{eqnarray}

 Dually a cyclic module is given by quadruple   $ C=(C_{n},\delta_i, \sigma_{i}, \tau_{n})$  where $C_{n}$'s, $n\geq 0$, are  $\mathbb{C}$-modules and there are  $\mathbb{C}$-module maps $\delta_{i}: C_{n}\longrightarrow C_{n-1},$ called faces, $ \sigma_{i}:C_n \ra C_{n+1},  \quad 0 \leq i \leq n$ called degeneracies and  $\tau:C_n \ra C_n$  called cyclic maps satisfying the following  commutativity relations;
\begin{eqnarray}
\begin{split}
 &  \delta_{i} \delta_{j}=  \delta_{j-1} \delta_{i},  ~~~~ \text{if} \quad i <j,\\
& \sigma_{i}  \sigma_{j} = \sigma_{j+1} \sigma_{i},   ~~\text{if} \quad i \leq j,\\
&\delta_{i} \sigma_{j} = \label{rel11}
 \left\{\begin{matrix}
\sigma_{j-1} \delta_{i},   \hspace{10 pt}\text{if} \hspace{20 pt} i<j,\\
\Id,  \hspace{22 pt}  \text{if}   \hspace{22 pt} i=j ~~ \text{or}~~ &i=j+1,\\
\sigma_{j} \delta_{i-1},  \hspace{8 pt}\text{if} \hspace{8 pt} i>j+1,
\end{matrix}\right.
\end{split}
\end{eqnarray}
and;
\begin{align}
\begin{split}
&\delta_{i}\tau= \tau\delta_{i-1}, \hspace{43 pt} 1\le i\le n ,\\
& \delta_{0} \tau= \delta_{n} , \quad \sigma_{i} \tau= \tau  \sigma _{i-1},\hspace{23 pt} 1\le i\le n ,\\
&\sigma_{0} \tau = \tau^2  \sigma_{n} , \quad\tau^{n+1} = \Id. \\ \label{rel22}
\end{split}
\end{align}

Now we recall the duality procedure for (co)cyclic modules \cite{NCG}. For any cyclic module $C= (C_{n},\d_i, \sigma_i, \tau)$ we define its cyclic dual  by $\breve{C}^{n}= C_n $  with the following  cofaces, codegeneracies and cyclic morphisms.
\begin{align}
\begin{split}
& d_0: \tau_n \sigma_{n-1}, \quad d_i:= \sigma_{i-1}: \breve{C}^{n}\longrightarrow \breve{C}^{n+1}, \quad\quad 1\leq i \leq n,\\
&s_i:= \delta_i: \breve{C}^{n}\longrightarrow \breve{C}^{n-1}, \quad\quad 0\leq i \leq n-1, \\
& t:= \tau^{-1}.
\end{split}
\end{align}
Conversely for any cocyclic module  $C=(C^{n}, d_i, s_i, t)$ one obtains its cyclic dual denoted by $\widetilde{C}$ where;
\begin{align}
\begin{split}
&\d_i:=s_i: \widetilde{C}_n\longrightarrow \widetilde{C}_{n-1}, \quad 0\leq i\leq n-1,\quad \d_n:= \d_0 \tau_n,\\
&\sigma_i:=d_{i+1}:\widetilde{C}_n\longrightarrow \widetilde{C}_{n+1}, \quad 0\le i\le n-1,\\
&\tau:= t^{-1}.
\end{split}
\end{align}

\subsection{Cyclic cohomology of Hopf algebras}
In this subsection, we study some examples of (co)cyclic modules and their dual  theories for Hopf algebras which will be used later in Sections $2$ and $3$.
Hopf cyclic cohomology was introduced by Connes and Moscovici in \cite{CM98} and was generalized to Hopf cyclic cohomology with coefficients in a stable anti Yetter-Drinfeld module in \cite{HaKhRaSo2}.
\medskip

A left-right  anti Yetter-Drinfeld (AYD) module  $M$ over a Hopf algebra $H$ is a left module and a right comodule over $H$ satisfying the following compatibility condition \cite{HaKhRaSo1};
\begin{equation}
  (h\triangleright m)\ns{0}\ot (h\triangleright m)\ns{1}=h\ps{2}\triangleright m\ns{0}\ot h\ps{3}m\ns{1} S(h\ps{1}).
\end{equation}
This is called stable if $m\ns{1}\triangleright m\ns{0}=m$ for all $m\in M$. We use left-right of SAYD modules in Section $2$ for Hopf Galois extensions.
Similarly  a right-left SAYD module $M$ over $H$ is a right module and a left comodule satisfying the following compatibility condition,
\begin{equation}
  (m\triangleleft h)\ns{-1} \ot (m\triangleleft h)\ns{0}= S(h\ps{3})m\ns{-1}h\ps{1}\ot m\ns{0}\triangleleft  h\ps{2}.
\end{equation}
This type of SAYD modules will be used in Section $3$ for Hopf Galois coextensions.
A generalization of Connes-Moscovici cocyclic module for Hopf algebras with coefficients in a SAYD module was introduced in \cite{HaKhRaSo2}.
The following cyclic module is the dual cyclic module of the mentioned cocyclic module for a left-right SAYD module $M$ over $H$ where $C_n(H,M)=H^{\ot n+1}\ot_H M$.
\begin{align}\label{dual-Hopf-cyclic-module}
 &\delta_i(h_0\ot \cdots h_n\ot_H m)=h_0\ot \cdots \ot \ve(h_i)\ot \cdots h_n\ot_H m, \nonumber \\
 & \sigma_i(h_0\ot \cdots h_n\ot_H m)=h_0\ot \cdots \ot \Delta(h_i)\ot \cdots \ot h_n\ot_H m, \nonumber\\
 &\tau_n(h_0\ot \cdots h_n\ot_H m)=h_n m\ns{1}\ot h_0\ot \cdots \ot h_{n-1}\ot_H m\ns{0}.
\end{align}
We use this cyclic module in Section $2$ for Hopf Galois extensions.
The authors in \cite{bm1} introduced a new cyclic module for Hopf algebras which is   a dual of Connes-Moscovici cocyclic module for Hopf algebras in some sense. Later they have shown  in \cite{bm2} that this cyclic module is isomorphic to the cyclic dual of Connes-Moscovici cocyclic module for Hopf algebras. A generalization of this cyclic module with coefficients in a SAYD module was introduced in \cite{HaKhRaSo2}. The following cocyclic module is the dual cocyclic module of the mentioned cyclic module for a right-left SAYD module $M$ over Hopf algebra $H$ where  $C^n(H,M)= H^{\ot n}\ot M.$
\begin{align}\label{dual-Hopf-cocyclic-module}
\begin{split}
 &\delta_{i}(\widetilde{h}\ot m)= h_1\ot \cdots \ot h_i \ot 1_H \ot \cdots\ot h_n \ot m,\\
 &\delta_{n}(\widetilde{b}\ot m)= h_1\ps{1}\ot\cdots \ot h_n\ps{1}\ot S(h_1\ps{2}\cdots h_n\ps{2})m\ns{-1} \ot m\ns{0},\\
 &\s_{i}(\widetilde{h}\ot m)= h_1\ot \cdots \ot h_i  h_{i+1}\ot  \cdots\ot h_n \ot m,\\
 &\s_{n}(\widetilde{h}\ot m)=h_1\ot \cdots \ot h_{n-1}\ve(h_n)\ot m , \\
 &\tau_{n}(\widetilde{h}\ot m)= h_2\ps{1}\ot\cdots\ot h_n\ps{1}\ot S(h_1\ps{2} \cdots h_n\ps{2})m\ns{-1}\ot m\ns{0}\triangleleft h_1\ps{1}.
 \end{split}
\end{align}
Here $\widetilde{h}= h_1\ot\cdots \ot h_n$. We use this cocyclic module in Section $3$ for Hopf Galois coextensions. For more about Hopf cyclic (co)homology we refer the reader to \cite{CM98}, \cite{HaKhRaSo2}, \cite{masoud-survey} and \cite{kay}.

\section{Hopf Galois extensions of algebras}

\subsection{Preliminaries of Hopf Galois extensions}
\label{section-1}

In this section we recall the notion of Hopf Galois extensions. We review  the results in \cite{JS} which imply  that Hopf Galois extensions are  sources of producing  stable anti Yetter-Drinfeld modules. Furthermore  we study  the relation between the relative cyclic homology of the algebra extension and the Hopf cyclic homology of the Hopf algebra involved in a Hopf Galois extension.

\medskip

Let $H$ be a Hopf algebra and $A$ a right $H$-comodule algebra with the coaction $\rho: A\longrightarrow A\ot H$. The set of coinvariants of this coaction $B=\{a\in A, \quad \rho(a)=a\ot 1_H\}$ is a subalgebra of $A$. The algebra extension $B\subseteq A$ is Hopf Galois if the canonical map
\begin{equation}
  \b: A\ot_B A\longrightarrow A\ot H, \quad a\ot_B a'\longmapsto a a'\ns{0}\ot a'\ns{1},
\end{equation}
is bijective. We denote such a Hopf Galois extension by $A(B)^H$.

The map $\b$ is an isomorphism of left $A$-modules and right $H$-comodules where the left $A$-module structures of $A\ot_B A$ and $A\ot H$ are given by
$a(a_1\ot_B a_2)= aa_1\ot_B a_2$ and $a(a'\ot h)= aa'\ot h$,  respectively,  and also the right $H$ comodule structures of $A\ot_B A$ and $A\ot H$ are given by $a\ot a'\longmapsto a\ot a'\ns{0}\ot a'\ns{1}$ and $a\ot h\longmapsto a\ot h\ps{1}\ot h\ps{2}$, respectively, for all $a,a',a_1,a_2\in A$ and $h\in H$. In fact an extension $B\subseteq A$ is Hopf Galois if $A\ot_B A$ and $A\ot H$ are isomorphic in $_A\mathcal{M}^H$, the category of left $A$-modules and right $H$-comodules, by the canonical map $\beta$.
The first natural question is why we call this extension Galois and what is its relation with classical Galois extension of fields.

\begin{example}
  {\rm \emph{Classical Galois extensions of fields}:\\
Let $  F\subseteq E$  be  a field extension with  the Galois group $G$. This extension is Galois if and only if $G$ acts faithfully on $E$. This is equivalent to
$\mid[E:F]\mid = \mid G \mid$. Let $\mid G \mid=n$ and $G=\{x_1,\cdots , x_n \}$. Furthermore suppose $\{b_1,\cdots, b_n\}$ be the basis of $E/F$ and $\{p^1,\cdots, p^n\}$ be the dual basis of $\{x_i\}_i$ in $(kG)^*$ where $kG$ is the group algebra of $G$. Since $G$ is finite then $(kG)^*$ is a Hopf algebra  and $p^n(x_i)=\delta_{ij}$. Furthermore  the action of $G$ and therefore $kG$  on $E$ amounts to a coaction of  $(kG)^*$  on $E$ as follows;
\begin{equation}
  E\longrightarrow E\ot (kG)^*, \quad a\longmapsto x_i\triangleright a\ot p^i.
\end{equation}
One checks that $E$ is a right $(kG)^*$-comodule algebra by this coaction. We define the canonical  Galois map to be;
\begin{equation}
  \b: E\ot_F E\longrightarrow E\ot_k (kG)^*, \quad a\ot b\longmapsto a(x_i\triangleright b)\ot p^i.
\end{equation}
One uses the independence of $\{p^i\}$ to show that $\b$ is injective. Furthermore since both tensor products are finite-dimensional $F$-algebras then $\b$ is a surjection.
  }
\end{example}

\begin{example}\label{trivial}{\rm
     A  Hopf algebra $H$ over the field  $F$ is a right $H$-comodule algebra where the right coaction is given by the comultiplication of $H$. For  $A=H$ we have  $B=F 1_H$. Therefore $H(F)_H$ is a $H$-Galois extension with the canonical map which is given by;
  \begin{equation}
  \beta: H\ot_F H\longrightarrow H\ot_F H, \quad h\ot k\longmapsto hk\ps{1}\ot k\ps{2}, \quad h,k\in H,
  \end{equation}
  with the inverse map  $\b^{-1}(h\ot k)= hS(k\ps{1})\ot k\ps{2}$.

  }
\end{example}
The following example shows that Hopf Galois extensions are algebraic analogue of principal bundles.
\begin{example}
{\rm \emph{Principal bundles}:\\
Let $P(M,G)$ denote the principal bundle of the smooth manifold $P$ over the base $M$ with the structure Lie group $G$. There is a smooth right action of $G$ on $P$ denoted by $\triangleleft: P\times G\longrightarrow P$ which is free, \ie
\begin{equation}
  u\triangleleft g= u\triangleleft g'\Longrightarrow g=g', \quad g,g'\in G, u\in P.
\end{equation}
If $G$ is finite then freeness of the $G$-action is equivalent to the injectivity of the following map;
\begin{equation}
  \beta: P\times G\longrightarrow P\times P, \quad (u,g)\longmapsto (u,u\triangleleft g).
\end{equation}
One notes that the orbit space is isomorphic to the base space $P/G\cong M$ and the canonical projection $\pi: P\longrightarrow M$ is smooth. Now we apply a noncommutative approach to the map $\b$ and we consider the space of functions on the principal bundle. Let $A=C^{\infty}(P)$, $B=C^{\infty}(M)$ and $H=(kG)^*$. We dualize the action $\triangleleft$ to obtain a right coaction $\blacktriangledown: A\longrightarrow A\ot (kG)^*$. Therefore the following induced map
\begin{equation}
  (m_A\ot\Id)\circ (\Id\ot \blacktriangledown): A\ot A\longrightarrow A\ot H,
\end{equation}
is surjective where $m_A$ denotes the multiplication of $A$. Now if we restrict the tensor product  in the domain of the previous map to the coinvariant space of the coaction $\blacktriangledown$, we obtain an isomorphism. This example is a motivation  to define the notion of quantum principal bundles. A right $H$-comodule algebra $A$ is called a quantum principal bundle if  the Galois   map $\b$  related to the coaction  of $H$ is an isomorphism. For more about this example we refer the reader to \cite{haj}, \cite{dur},  \cite{bm} and \cite{kz}.

}

\end{example}

\begin{remark}{\rm
There are two major difference between  Hopf Galois theory and the classical Galois theory of field extensions. Classical Galois extensions can be characterized by the normal, separable field extensions without explicitly mentioning the Galois group. Furthermore, a Galois field extension determines uniquely the Galois group. There is no similar result for Hopf Galois theory, although a  characterization exists for Hopf Galois extensions of Hopf algebroids with some finiteness conditions.   Another difference
is that the fundamental theorem of Galois theory of fields extensions  does not hold for the
Hopf Galois extensions of algebras. We refer the reader for more in this regard to \cite{bal2} and \cite{joost}.
}

\end{remark}
Hopf  Galois extensions   were generalized later in different ways. For example the authors in \cite{bh} have introduced the notion of coalgebra extensions. Also see \cite{SS}. Furthermore,  Galois extensions have been studied for extended versions of Hopf algebras such as  Hopf algebroids in \cite{b1} and $\times$-Hopf algebras in \cite{bs2} and \cite{hr}.
\subsection{Homology theories related to  Hopf Galois extensions}

 Basically there are two well-known homology theories which are related to a Hopf Galois extension $A(B)_H$. One is the relative cyclic homology of the algebra extension  $B\subseteq A$ and another one is the Hopf cyclic homology of the Hopf algebra $H$ involved in the Hopf Galois extension. In this subsection we recall that these two homology theories are isomorphic. Here the Hopf cyclic homology of $H$ has  coefficients in a SAYD module which is constructed by the canonical isomorphism of the Hopf Galois extension. This shows that Hopf Galois extensions are the  sources of  SAYD modules. In this subsection we recall some related  results from \cite{JS} to be able to compare them with the similar results in the dual case of Hopf Galois coextensions in Section $3$.

\medskip

Suppose $B$ be an associative algebra over the field of complex numbers. Let $M$ and $N$ be $B$-bimodules. The cyclic tensor product  of $M$ and $N$ is defined to be $M\widehat{\ot}_BN:= (M\ot_B N)\ot_{B^e} B$, \cite{Q}. It can be shown that
\begin{equation}
  M\widehat{\ot}_B N\cong \frac{ M\ot_B N}{[M\ot_B N, B]},
\end{equation}
where the bracket stands for the subspace generated by all commutators. In fact,
\begin{equation}
  M\widehat{\ot}_B N= \frac{M\ot_B N}{\thicksim},
\end{equation}
where $\thicksim$ is the  equivalence relation defined by the following relation;
\begin{equation}
  \{ bm\ot_B n = m\ot_B nb\}.
\end{equation}
Similarly if $M_1, \cdots, M_n$ are $B$-bimodules then their cyclic tensor product can be defined as
\begin{equation}
  M_1\widehat{\ot}\cdots \widehat{\ot}M_n:= (M_1\ot_B \cdots \ot M_n)\ot_{B^e} B.
\end{equation}
Now let $B\subseteq A$ be  an algebra extension and $M$ a $B$-bimodule. Let $C_n(A(B), M)= M\widehat{\ot}_B A^{\widehat{\ot}_B} $. One defines a  simplicial module where the faces are given by

\begin{equation}
d_i(m \widehat{\ot}_B a_1 \widehat{\ot}_B \ldots \widehat{\ot}_B a_n )=
\begin{cases}
ma_1\widehat{\ot}_B a_2\widehat{\ot}_B\cdots \widehat{\ot}_Ba_n, & \text{$i=0$},\\
m\widehat{\ot}_B a_1\widehat{\ot}\cdots \widehat{\ot}_B a_i a_{i+1} \widehat{\ot}_B \cdots \widehat{\ot}_B a_n ,& \text{$0 < i < n$},\\
a_nm\widehat{\ot}_B a_1\widehat{\ot}_B \cdots \widehat{\ot}_B a_{n-1}, & \text{$i=n$},
\end{cases}
\end{equation}
and the degeneracies are defined as follows;
\begin{equation}
  s_i(m \widehat{\ot}_B a_1 \widehat{\ot}_B \ldots \widehat{\ot}_B a_n)=m \widehat{\ot}_B a_1 \widehat{\ot}_B \cdots \widehat{\ot}_B 1_A \widehat{\ot}_B \cdots \widehat{\ot}_B a_n.
\end{equation}
It is shown in \cite{JS} that if $M=A$, then the following cyclic operator turns $C_n(A(B), A)$ in to a cyclic module;
\begin{equation}
  t_n(a_0 \widehat{\ot}_B a_1 \widehat{\ot}_B \ldots \widehat{\ot}_B a_n)=a_n \widehat{\ot}_B a_0 \widehat{\ot}_B \ldots \widehat{\ot}_B a_{n-1}.
\end{equation}
The cyclic homology of this cyclic module is called relative cyclic homology of the algebra extension  $B\subseteq A$ and it is denoted by $HC_*(A(B),A)$.
Although the space $A\ot_B A$ has not a well-defined algebra structure, the subspace $(A\ot_B A)^B$ is an associative algebra by the following multiplication;
\begin{equation}\label{algebrast}
  (a_1\ot_B a'_1)(a_2\ot_B a'_2)= a_1 a_2\ot_B a'_2a_2.
\end{equation}
The canonical map $\beta$ induces the following isomorphism;
$$\overline{\b}: (A\ot_B A)^B\longrightarrow A^B\ot H.$$
Therefore in spit of the fact that the canonical isomorphism of the Hopf Galois extension is not an algebra map, the  algebra structure  \eqref{algebrast} enables us to obtain the following anti-algebra map;
\begin{equation}
  \kappa: H\longrightarrow (A\ot_B A)^B,\quad \kappa:= \overline{\b}^{-1}\circ i,
\end{equation}
where $i:H\longrightarrow A^B\ot H$ is given by $h\longmapsto 1_A \ot h$. We denote the summation notation $\kappa(h)= \kappa^1(h)\ot \kappa^2(h)$ for the image of the map $\kappa$. Now let $M$ be a $A$-bimodule. We set;
\begin{equation}
  M^B=\{m\in M, \quad bm=mb, \quad \forall b\in B\}, \quad M_B= \frac{M}{[M,B]}.
\end{equation}
The map $\kappa$ enables us to define a right $H$-module structure on $M^B$ given by
\begin{equation}
  mh=\kappa^1(h)m \kappa^2(h).
\end{equation}
Furthermore a left $H$-module structure on $M_B$ can be defined by
\begin{equation}\label{action-sayd}
  hm= \kappa^2(h) m \kappa^1(h).
\end{equation}
In a special case when $M=A$, the quotient space $A_B$ is also a right $H$-comodule by the original coaction of $H$ over $A$. This coaction with the action defined in \eqref{action-sayd} amounts $A_B$ to a left-right SAYD module over $H$.

For any Hopf Galois extension $A(B)_H$, one iteratively use the map $\b$ to transfer the cyclic structure of $C_n(A(B), A)$  to $C_n(H,A_B)=H^{\ot n}\ot_H H\ot A_B$. The result cyclic module on $C_n(H,A_B)$ is proved in \cite{JS} to be the cyclic module associated to Hopf cyclic homology of Hopf algebra $H$ with coefficients in SAYD module $A_B$ which is introduced in \eqref{dual-Hopf-cyclic-module}. The results of this subsection have been later generalized for Hopf Galois extensions of $\times$-Hopf algebras in \cite{bs2} and for the Equivariant Hopf Galois extensions of $\times$-Hopf algebras in \cite{hr}.
We summarize the headlines of this section  to be able to compare them with the dual case of Hopf Galois coextensions in the sequel section.
\begin{remark}\label{remark}{\rm
For any Hopf Galois extension $A(B)^H$, we have;
\begin{enumerate}
 \item [i)] The relative cyclic module of the algebra extension is a quotient space given by,
 $$B\ot_{B^e} A\ot_B\ot_B \cdots \ot_B A= A\widehat{\ot}_B\cdots \widehat{\ot}_B A= \frac{A\ot_B\cdots \ot_B A}{\sim}.$$
 \item[ii)] The objects $A\ot_B A$ and $A\ot H$ are isomorphic in the category of $_A\mathcal{M}^H$.
 \item [iii)] The subspace $A^B$ is an subalgebra of $A$.
 \item[iv)] The subspace $(A\ot_B A)^B$ is an algebra.
 \item [v)] The subspace $A^B$ is a left $(A\ot_B A)^B$-module and a right $H$-module.
  \item [vi)] The quotient space  $A_B$ is a right $(A\ot_B A)^B$-module and a left $H$-module.
 \item [vii)]The quotient space $A_B\cong B\ot_{B^e} A$ is a left-right SAYD module over $H$.
 \item [viii)] $HC_*(H,A_B)\cong HC_*(A(B),A)$.
\end{enumerate}

}

\end{remark}
\section{Hopf Galois coextensions of coalgebras}
In this section we use the notions and tools for Hopf algebras and Hopf cyclic cohomology to prove that  Hopf Galois coextensions of coalgebras are the sources of stable anti Yetter-Drinfeld modules. Furthermore we  show that the  Hopf cyclic cohomology of the Hopf algebra involved in a Hopf Galois coextension with coefficients in a SAYD  module, which is dual to $A_B$,  is isomorphic to the relative cyclic cohomology of  coalgebra coextension. These are the  dual of the results in \cite{JS} for Hopf Galois extensions which are explained in Subsection $2.2$.
The authors in \cite{hr2} have proved similar  results  for $\times$-Hopf coalgebras.
\subsection{Preliminaries of Hopf Galois coextensions}
Let $H$ be a Hopf algebra and $C$ a right $H$-module coalgebra with the action $ \triangleleft: C \ot H\longrightarrow C$. The set
\begin{equation}
  I= \{ c \triangleleft h - \ve(h)c\},
\end{equation}
is a two-sided coideal of $C$ and therefore $D=\frac{C}{I}$ is a coalgebra. The natural surjection $\pi: C\twoheadrightarrow D$ is called a right $H$-Galois coextension if the canonical map
\begin{equation}\label{galois}
  \beta: C\ot H\longrightarrow C\Co_D C, \quad ~~~ c\ot h\longmapsto c\ps{1}\ot c\ps{2}\triangleleft h,
\end{equation}
is a bijection. Such a Hopf Galois coextensions is denoted by $C(D)^H$. Here the $D$-bicomodule structure of $C$ is given by
\begin{equation}
  c\longmapsto \pi(c\ps{1})\ot c\ps{2}, \quad ~~~~~~  c\longmapsto c\ps{1}\ot \pi(c\ps{2}).
\end{equation}

Since by the definition of $D$ for all $h\in H$ and $c\in C$ we have
\begin{equation}\label{pi}
  \pi(c\triangleleft h)= \ve(h)\pi(c),
\end{equation}
the map $\b$ defined in \eqref{galois} is well-defined.
We denote the inverse of Galois map  $\beta$ by the following summation notation;
\begin{equation}
\beta^{-1}(c\Co_D c'):= \beta_{-}(c\Co_D c')\ot \beta_{+}(c\Co_D c'),
\end{equation}
where $Im\beta_{-}\in C$ and $Im\beta_{+}\in H.$ If there is no confusion  we can simply write $\beta=\beta_{-} \ot \beta_{+}$.
Since $C\Co_D C$ is not a coalgebra, the map $\b$ is not an isomorphism of coalgebras . Instead it is an isomorphism of left $C$-comodules and  right $H$-modules. The map $\b$ is a  left $C$-comodule  map where the left $C$-comodule structures of $C\Co_D C$ and $C\ot H$ are given by
$$c\Co_D c'\longmapsto c\ps{1}\ot c\ps{2}\Co_D c', \quad ~~~ c\ot h\longmapsto c\ps{1}\ot c\ps{2}\ot h.$$ Furthermore it is a right $H$-module map where the right $H$ module structures of $C\Co_D C$ and $C\ot H$ are given by
$$(c\Co_D c')\triangleleft h= c\Co_D c'\triangleleft h, \quad ~~~ (c\ot h)\triangleleft h'= c\ot hh'.$$ The first $H$-action introduced above is well-defined by \eqref{pi}. In fact the coextension $\pi: C\twoheadrightarrow D$ is Hopf Galois if $C\ot H$ and $C\Co_D C$ are isomorphic in  $^C\mathcal{M}_H$, the category of left $C$-comodules and right $H$-modules, by the canonical map $\b$. One notes that the categories $^C\mathcal{M}_H$  and $_A\mathcal{M}^H$  related to Hopf Galois extensions and coextensions are dual to each other.

\begin{lemma}\label{canonicalbijection} Let  $C(D)^{H}$ be a Hopf Galois coextension with canonical bijection $\beta$. Then  the following properties hold.
 \begin{enumerate}
\item[i)]  $\beta_{-}\ps{1}\ot \beta_{-}\ps{2}\triangleleft \beta_{+}= Id_{C\Co_D C}.$
 \item[ ii)] $\beta_{-}(c\ps{1}\Co_D c\ps{2}\triangleleft h) \ot \beta_{+}(c\ps{1}\Co_D c\ps{2}\triangleleft h)= c\ot h, \quad c\in C, h\in H.$
 \item[ iii)] $\beta_-(c_1\Co_D c_2)\triangleleft \beta_+(c_1\Co_D c_2)= \ve(c_1)c_2.$
 \item[iv)] $\ve(\beta_-(c_1\Co_D c_2))\ve(\beta_+(c_1\Co_D c_2))= \ve(c_1)\ve(c_2).$
 \item[v)] $\beta_-(c_1\Co_D c_2\triangleleft h)\ot \beta_+(c_1\Co_D c_2\triangleleft h)= \beta_-(c_1\Co_D c_2)\ot \beta_+(c_1\Co_D c_2)h$.
 \item[vi)] $[\beta_-(c_1\Co_D c_2)]\ps{1}\ot [\beta_-(c_1\Co_D c_2)]\ps{2}\ot \beta_+(c_1\Co_D c_2)=\\
 c_1\ps{1}\ot \beta_-(c_1\ps{2}\Co_D c_2)\ot \beta_+(c_1\ps{2}\Co_D c_2).$
 \item[vii)] $ \beta_-(c_1\Co_D c_2)\ps{1} \ot \beta_-(c_1\Co_D c_2)\ps{2}\ot \beta_-(c_1\Co_D c_2)\ps{3}\triangleleft \beta_+(c_1\Co_D c_2)\ps{1}\ot \beta_-(c_1\Co_D c_2)\ps{4}\triangleleft \beta_+(c_1\Co_D c_2)\ps{2}= c_1\ps{1}\ot c_1\ps{2}\Co_D c_2\ps{1}\ot c_2\ps{2}.$
 \end{enumerate}
\end{lemma}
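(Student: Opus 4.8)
The plan is to read off the seven identities from two structural facts: that $\beta^{-1}$, written $\beta_-\ot\beta_+$, is a genuine two-sided inverse of $\beta$, and that $\beta$ is simultaneously a morphism of left $C$-comodules and of right $H$-modules. I would arrange the derivation so that i) and ii) come first, and the remaining five are consequences obtained by applying $\ve$, $\Delta$, the $H$-action, or the $C$-coaction to i), ii), and to the structural compatibilities of the module coalgebra $C$.

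First I would establish i) and ii) directly from the defining formula $\beta(c\ot h)=c\ps{1}\ot c\ps{2}\triangleleft h$. Identity i) is nothing but $\beta\circ\beta^{-1}=\Id_{C\Co_D C}$ spelled out: applying $\beta$ to $\beta^{-1}(c_1\Co_D c_2)=\beta_-\ot\beta_+$ yields $\beta_-\ps{1}\ot\beta_-\ps{2}\triangleleft\beta_+$, which must return $c_1\Co_D c_2$. Likewise ii) is $\beta^{-1}\circ\beta=\Id_{C\ot H}$, since $\beta(c\ot h)=c\ps{1}\ot c\ps{2}\triangleleft h$ forces $\beta^{-1}(c\ps{1}\Co_D c\ps{2}\triangleleft h)=c\ot h$. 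Throughout one keeps in mind that everything is well defined by \eqref{pi}, which guarantees that the relevant leg descends to $D$.

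From i) the next two identities are immediate. Applying the counit $\ve$ to the first tensor leg of i) and using $\ve(\beta_-\ps{1})\beta_-\ps{2}=\beta_-$ collapses the left-hand side to $\beta_-\triangleleft\beta_+$ and the right-hand side to $\ve(c_1)c_2$, which is iii). Applying $\ve_C$ to iii) and invoking the module-coalgebra compatibility $\ve_C(c\triangleleft h)=\ve_C(c)\ve_H(h)$ then gives iv). The two structural identities v) and vi) record that $\beta^{-1}$ is, respectively, a right $H$-module and a left $C$-comodule morphism; both follow formally from the corresponding properties of $\beta$ stated before the lemma, by transporting the module/comodule structures through $\beta^{-1}$ and reading the result in Sweedler notation (for v) using $(c\ot h)\triangleleft h'=c\ot hh'$, for vi) using the coactions $c\ot h\mapsto c\ps{1}\ot c\ps{2}\ot h$ and $c\Co_D c'\mapsto c\ps{1}\ot c\ps{2}\Co_D c'$).

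Finally, identity vii) I would obtain from i) by comultiplying both tensor legs, i.e. applying $\Delta_C\ot\Delta_C$ to $\beta_-\ps{1}\ot\beta_-\ps{2}\triangleleft\beta_+=c_1\Co_D c_2$. The first leg splits as $\beta_-\ps{1}\ot\beta_-\ps{2}$, while on the second leg the identity $\Delta_C(c\triangleleft h)=(c\ps{1}\triangleleft h\ps{1})\ot(c\ps{2}\triangleleft h\ps{2})$, which expresses that the action is a coalgebra map, produces $(\beta_-\ps{3}\triangleleft\beta_+\ps{1})\ot(\beta_-\ps{4}\triangleleft\beta_+\ps{2})$; together these give exactly the left side of vii), while the right side is $\Delta_C\ot\Delta_C$ applied to $c_1\Co_D c_2$, with the surviving cotensor sitting between the two inner legs. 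I expect the main obstacle to be not any single deep step but the bookkeeping: one must first record, and then use consistently, the compatibilities of the right $H$-module coalgebra $C$ (namely $\Delta_C(c\triangleleft h)=(c\ps{1}\triangleleft h\ps{1})\ot(c\ps{2}\triangleleft h\ps{2})$ and $\ve_C(c\triangleleft h)=\ve_C(c)\ve_H(h)$), and one must track carefully where the cotensor relation over $D$ is preserved when coproducts are applied, so that each displayed expression genuinely lives in the intended cotensor product rather than merely in some tensor power of $C$.
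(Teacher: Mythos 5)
Your proposal is correct and follows essentially the same route as the paper: i) and ii) as restatements of $\beta\circ\beta^{-1}=\Id$ and $\beta^{-1}\circ\beta=\Id$, iii) and iv) by applying counits to i) together with the $H$-module coalgebra property of $C$, v) and vi) as the right $H$-module and left $C$-comodule morphism properties of $\beta^{-1}$, and vii) by applying $\Delta_C\ot\Delta_C$ to i). Your attention to the cotensor bookkeeping is a point the paper leaves implicit, but the argument is the same.
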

\begin{proof}
  The relations i)  and ii) are equivalent to  $ \beta \beta^{-1}= \Id$ and  $\beta^{-1}\beta=\Id$, respectively. The relation iii) is obtained by applying $\ve\ot \Id$ on the both hand sides of i). The relation iv) is proved by applying $\ve\ot \ve$ on the both hand sides of i) and $H$-module coalgebra property of $C$. The relation v) is equivalent to the right $H$-module property of the map $\beta^{-1}$. The relation vi) is equivalent to the left $C$-comodule map property of the map $\beta^{-1}$ where the left $C$ comodule structures of $C\ot H$ and $C\Co_D C$ are given by $c\ot h\longmapsto c\ps{1}\ot c\ps{2}\ot h$ and $c_1\ot c_2\longmapsto c_1\ps{1}\ot c_1\ps{2}\ot c_2$, respectively. The relation vii) holds by applying $\Delta_C \ot \Delta_C$ on the both hand sides of relation (i) and using $H$-module coalgebra property of $C$.
\end{proof}

\subsection{ Hopf Galois coextensions as sources of stable anti-Yetter-Drinfeld modules}
In this subsection we show that Hopf Galois coextensions are  sources of  stable anti Yetter-Drinfeld modules.

\medskip

For any $D$-bicomodule $C$, we set
\begin{equation}
  C^D=\left\{c\in C, \quad  c\ns{0}\varphi(c\ns{1})= c\ns{0}\varphi(c\ns{-1}) \right \}_{\varphi\in D^*},
\end{equation}
where $D^*$ is the linear dual of $D$.
We set
\begin{equation}
  C_D=\frac{C}{W},
\end{equation}
where
$$W= \left\{c\ns{0}\varphi(c\ns{1})- c\ns{0}\varphi(c\ns{-1}), \quad c\in C \right \}_{\varphi\in D^*}.$$
For any  coalgebra coextension  $\pi: C\twoheadrightarrow D$, we precisely obtain
\begin{equation}\label{TS}
C^D:=\left\{c\in C, \quad  c\ps{1} \varphi(\pi(c\ps{2}))= c\ps{2}\varphi( \pi(c\ps{1})), \quad c\in C\right\}_{\varphi\in D^*},
\end{equation}
 and
\begin{equation}
W=\left\{  c\ps{1} \varphi(\pi(c\ps{2}))- c\ps{2}\varphi( \pi(c\ps{1})), \quad c\in C\right\}_{\varphi\in D^*}.
\end{equation}

\begin{lemma}\label{SAYDaction}
 Let $C(D)^H$ be a Hopf Galois coextension  with the corresponding $H$-action $\triangleleft:C\ot H\longrightarrow C$. Then $\triangleleft$ induces the following $H$-action, $\blacktriangleleft:C^D\ot H\longrightarrow C^D$.
 \end{lemma}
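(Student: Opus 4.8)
The plan is to show that the asserted map $\blacktriangleleft$ is nothing but the restriction of the given action $\triangleleft$ to the subspace $C^D$, so that the only substantive point is that $C^D$ is stable under $\triangleleft$; once stability is established, the $H$-action axioms for $\blacktriangleleft$ are inherited verbatim from those of the $H$-module $C$.

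First I would recast the defining condition \eqref{TS} of $C^D$ in a coordinate-free form. Since the full linear dual $D^*$ separates the points of $D$, the requirement that $c\ps{1}\,\varphi(\pi(c\ps{2}))= c\ps{2}\,\varphi(\pi(c\ps{1}))$ hold for every $\varphi\in D^*$ is equivalent to the single identity $c\ps{1}\ot \pi(c\ps{2})= c\ps{2}\ot \pi(c\ps{1})$ in $C\ot D$. It is this tensor identity that I would transport along the action, as it makes the bookkeeping transparent.

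Next, I would fix $c\in C^D$ and $h\in H$ and compute the coproduct of $c\triangleleft h$. Because $C$ is a right $H$-module coalgebra, $\Delta$ is an $H$-module map, so $(c\triangleleft h)\ps{1}\ot (c\triangleleft h)\ps{2}= c\ps{1}\triangleleft h\ps{1}\ot c\ps{2}\triangleleft h\ps{2}$. Applying $\Id\ot \pi$ and using \eqref{pi} in the form $\pi(c\ps{2}\triangleleft h\ps{2})=\ve(h\ps{2})\pi(c\ps{2})$, the counit axiom collapses the second leg and yields $(c\triangleleft h)\ps{1}\ot \pi((c\triangleleft h)\ps{2})= (c\ps{1}\triangleleft h)\ot \pi(c\ps{2})$. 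The symmetric computation, instead applying $\pi$ to the first leg and using $\pi(c\ps{1}\triangleleft h\ps{1})=\ve(h\ps{1})\pi(c\ps{1})$, gives $(c\triangleleft h)\ps{2}\ot \pi((c\triangleleft h)\ps{1})= (c\ps{2}\triangleleft h)\ot \pi(c\ps{1})$.

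Finally I would observe that the two right-hand sides are exactly the images of the two sides of the defining identity $c\ps{1}\ot \pi(c\ps{2})= c\ps{2}\ot \pi(c\ps{1})$ under the single linear map $(-\triangleleft h)\ot \Id_D\colon C\ot D\to C\ot D$. Since $c\in C^D$ forces those two sides to be equal, their images coincide, whence $(c\triangleleft h)\ps{1}\ot \pi((c\triangleleft h)\ps{2})= (c\triangleleft h)\ps{2}\ot \pi((c\triangleleft h)\ps{1})$, i.e. $c\triangleleft h\in C^D$. Declaring $\blacktriangleleft$ to be $\triangleleft$ restricted to $C^D\ot H$ then produces the claimed action. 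The only genuinely delicate part is keeping the Sweedler indices straight while simultaneously invoking the module coalgebra compatibility and \eqref{pi}; the rest is a formal consequence of $D^*$ separating points and of the action axioms already holding on $C$.
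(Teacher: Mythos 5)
Your proposal is correct and is essentially the paper's own argument: both reduce the lemma to showing $c\triangleleft h\in C^D$, and both establish this via the $H$-module coalgebra property of $C$, the identity \eqref{pi}, the counit axiom, and the hypothesis $c\in C^D$, in the same order. The only (cosmetic) difference is that you first convert the condition of \eqref{TS} into the tensor identity $c\ps{1}\ot\pi(c\ps{2})=c\ps{2}\ot\pi(c\ps{1})$ using that $D^*$ separates points, whereas the paper carries an arbitrary $\varphi\in D^*$ through the computation --- an equivalence the paper itself invokes later in the proof of Lemma \ref{lemma C^D}.
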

\begin{proof}
 It is enough to show that $c\triangleleft h\in C^D$ for all $c\ot h\in C^D\ot H$.  Indeed,
\begin{align*}
&(c\triangleleft h)\ps{1}\varphi(\pi(c\triangleleft h)\ps{2}))=c\ps{1}\triangleleft h\ps{1}\varphi(\pi(c\ps{2}\triangleleft h\ps{2}))\\
&=c\ps{1}\triangleleft h\ps{1}\varphi(\pi(c\ps{2}))\ve(h\ps{2})= c\ps{1}\triangleleft h\varphi(\pi(c\ps{2}))\\
&=c\ps{2}\triangleleft h\varphi(\pi(c\ps{1}))=c\ps{2}\triangleleft h\ps{2}\varphi(\pi(c\ps{1}))\ve(h\ps{1})\\
&=c\ps{2}\triangleleft h\ps{2}\varphi(\pi(c\ps{1}\triangleleft h\ps{1}))=(c\triangleleft h)\ps{2}\varphi(\pi(c\triangleleft h)\ps{1})).
\end{align*}
We use the $H$-module coalgebra property of $C$ in the first equality and $c\in C^D $ in the fourth equality.

\end{proof}

One can define a $D$-bicomodule structure on $C\Co_D C$ as follows;
\begin{align}\label{coalgebras}
  c\Co_D c'\longmapsto c\Co_D c'\ps{1}\ot \pi(c'\ps{2}),\quad c\Co_D c'\longmapsto \pi(c\ps{1})\ot c\ps{2}\Co_D c'.
\end{align}
It is easy to check that the  coactions defined in \eqref{coalgebras} are well-defined. We set
\begin{equation}
  (C\Box_D C)_D= \frac{C\Co_D C}{W},
\end{equation}
where
\begin{equation}\label{ww}
  W= \langle   c\ot c'\ps{1} \varphi(\pi(c'\ps{2}))- c\ps{2}\ot c'\varphi(\pi(c\ps{1}))  \rangle,
\end{equation}
 and  $\varphi\in D^*$, $c\ot c'\in C\Co_D C$. We denote the elements of the quotient by an over line. Although  $C\Co_D C$ is not a coalgebra, it is proved in \cite[section 6.4, page 93-95]{bal2} that the quotient space   $(C\Co_D C)_D$  is a coassociative coalgebra where the coproduct and counit maps are given by
\begin{equation}\label{coproduct-main}
  \Delta(\overline{c\ot c'})= \overline{c\ps{1}\Co_D c'\ps{2}}\ot \overline{c\ps{2}\Co_D  c'\ps{1}}. \qquad \ve(\overline{c\ot c'})=\ve(c)\ve(c').
\end{equation}

The following lemma can be similarly proved as \cite{hr2}[Lemma 4.4].
\begin{lemma}
 If $C(D)^H$ be a Hopf-Galois coextension, then $\beta$ induces a bijection $\bar{\beta}:C_D\ot H\longrightarrow(C\Box_D C)_D$ where
 $$\bar{\beta}(\bar{c}\ot h)=\overline{\beta(c\ot h)}.$$
 \end{lemma}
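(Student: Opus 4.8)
The plan is to reduce the statement to a single equality of subspaces. Since everything is over a field, $-\ot H$ is exact, so $C_D\ot H=(C/W)\ot H\cong (C\ot H)/(W\ot H)$, while by definition $(C\Box_D C)_D=(C\Box_D C)/W'$ with $W'$ as in \eqref{ww}. Because $\beta\colon C\ot H\to C\Box_D C$ is already a linear bijection, the formula $\bar\beta(\bar c\ot h)=\overline{\beta(c\ot h)}$ defines a well-behaved map on quotients exactly when $\beta$ carries $W\ot H$ onto $W'$, i.e. when $\beta(W\ot H)=W'$. So the whole lemma reduces to proving this one equality, after which the standard fact that a linear isomorphism matching two subspaces descends to an isomorphism of the quotients finishes the argument.

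To organize the computation I would, for each $\varphi\in D^*$, introduce the linear operators
$$S_\varphi\colon C\to C,\qquad S_\varphi(c)=c\ps{1}\varphi(\pi(c\ps{2}))-c\ps{2}\varphi(\pi(c\ps{1})),$$
$$T_\varphi\colon C\Box_D C\to C\Box_D C,\qquad T_\varphi(c\ot c')=c\ot c'\ps{1}\varphi(\pi(c'\ps{2}))-c\ps{2}\ot c'\varphi(\pi(c\ps{1})),$$
where $T_\varphi$ preserves $C\Box_D C$ by the same verification that makes the coactions \eqref{coalgebras} well defined, so that $W=\sum_{\varphi}\Im S_\varphi$ and $W'=\sum_{\varphi}\Im T_\varphi$. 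The heart of the proof is the intertwining identity
$$T_\varphi\circ\beta=\beta\circ(S_\varphi\ot\Id_H)\colon C\ot H\longrightarrow C\Box_D C.$$
I would check it by direct Sweedler bookkeeping: expand $\beta(c\ot h)=c\ps{1}\ot(c\ps{2}\triangleleft h)$, split the second leg in the first term of $T_\varphi$ using the $H$-module coalgebra rule $\Delta(c\triangleleft h)=(c\ps{1}\triangleleft h\ps{1})\ot(c\ps{2}\triangleleft h\ps{2})$, split the first leg in the second term, then apply \eqref{pi} in the form $\pi(c\ps{3}\triangleleft h\ps{2})=\ve(h\ps{2})\pi(c\ps{3})$ to absorb the stray $h\ps{2}$ and reindex by coassociativity. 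Both $T_\varphi(\beta(c\ot h))$ and $\beta(S_\varphi(c)\ot h)$ then collapse to the common expression
$$c\ps{1}\ot(c\ps{2}\triangleleft h)\,\varphi(\pi(c\ps{3}))-c\ps{2}\ot(c\ps{3}\triangleleft h)\,\varphi(\pi(c\ps{1})),$$
establishing the identity.

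Granting this, the equality $\beta(W\ot H)=W'$ is immediate: since $\beta$ is surjective, $\Im T_\varphi=T_\varphi(\beta(C\ot H))=\beta\big((S_\varphi\ot\Id_H)(C\ot H)\big)=\beta(\Im S_\varphi\ot H)$, and summing over $\varphi\in D^*$ gives $W'=\beta(W\ot H)$. Hence $\beta$ restricts to a linear bijection $W\ot H\to W'$ and therefore descends to the asserted bijection $\bar\beta\colon C_D\ot H\to(C\Box_D C)_D$.

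I expect the only genuine obstacle to be the intertwining identity: one must keep the two different coproduct splittings straight (second leg in one term, first leg in the other) and invoke \eqref{pi} at precisely the spot where the $h\ps{2}$ produced by the $H$-module coalgebra rule appears, so that it is killed by the counit; everything else is formal. As a sanity check and conceptual reformulation, this identity is exactly the assertion that $\beta$ is a morphism of $D$-bicomodules once $C\ot H$ is endowed with the $D$-bicomodule structure induced from that of $C$ (with $H$ carried along in the $C$-slot), the right-coaction compatibility being precisely where \eqref{pi} enters; from that viewpoint $\bar\beta$ is simply the map induced by a bicomodule isomorphism on the corresponding $(-)_D$ quotients.
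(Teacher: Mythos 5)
Your argument is correct, and it is in fact more self-contained than what the paper itself offers: the paper gives no proof of this lemma, delegating it to \cite{hr2}[Lemma 4.4], where the analogous statement is proved in the more general setting of Hopf--Galois coextensions of $\times$-Hopf coalgebras; your proof is the direct Hopf-algebra specialization, exactly in the spirit the paper advertises in its introduction. Your reduction to the single subspace equality $\beta(W\ot H)=W'$ is the right move, and the intertwining identity $T_\vp\circ\beta=\beta\circ(S_\vp\ot\Id_H)$ for your operators $S_\vp$ (on $C$) and $T_\vp$ (on $C\Co_D C$) checks out: with $\beta(c\ot h)=c\ps{1}\ot(c\ps{2}\triangleleft h)$, both sides reduce, via the module-coalgebra rule $\Delta(c\triangleleft h)=(c\ps{1}\triangleleft h\ps{1})\ot(c\ps{2}\triangleleft h\ps{2})$ and \eqref{pi}, to the common value $c\ps{1}\ot(c\ps{2}\triangleleft h)\vp(\pi(c\ps{3}))-c\ps{2}\ot(c\ps{3}\triangleleft h)\vp(\pi(c\ps{1}))$, as you claim. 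Two points you handled that genuinely need handling: first, the generators of $W'$ in \eqref{ww} must individually lie in $C\Co_D C$ for $T_\vp$ to be defined on the cotensor product, which is the same verification as the well-definedness of the coactions \eqref{coalgebras}; second, the bijectivity of $\beta$ is what upgrades the inclusion $\beta(W\ot H)\subseteq W'$ (well-definedness of $\bar\beta$) to the equality needed for injectivity, and your image computation $T_\vp(\beta(C\ot H))=\beta((S_\vp\ot\Id_H)(C\ot H))$ delivers both inclusions at once. Your closing reformulation---that the identity just says $\beta$ is a map of $D$-bicomodules once $C\ot H$ carries the $D$-bicomodule structure induced from $C$ with $H$ inert, so that $\bar\beta$ is the map induced on the $(-)_D$ quotients by functoriality---is essentially the conceptual mechanism behind the cited general lemma; your route and the intended one coincide in substance, with yours spelling out the Sweedler bookkeeping that the paper leaves to the reference.
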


The following lemma can be similarly proved as \cite{hr2}[Lemma 4.5].
\begin{lemma}
Let  $C(D)^H$  be a Hopf-Galois coextension. Define
$$\kappa:=(\varepsilon\ot \Id_H)\circ \bar{\beta}^{-1}:(C\Box_DC)_D\longrightarrow H.$$ The map $\kappa$ is an anti coalgebra map.
\end{lemma}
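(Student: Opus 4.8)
The plan is to prove the two assertions separately: first that $\kappa = (\varepsilon\ot\Id_H)\circ\bar\beta^{-1}$ is a well-defined coalgebra anti-map, namely $\Delta_H\circ\kappa = (\kappa\ot\kappa)\circ\Delta^{\mathrm{cop}}$ on $(C\Box_D C)_D$, and second that it respects counits, $\ve_H\circ\kappa = \ve$. Throughout I would use the previous lemma, which gives a bijection $\bar\beta:C_D\ot H\to(C\Box_D C)_D$, so that $\bar\beta^{-1}$ is legitimately defined, and I would repeatedly invoke the inverse-map identities in Lemma~\ref{canonicalbijection}, writing $\beta^{-1}(c\Co_D c')=\beta_-\ot\beta_+$. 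The cleanest route is to transport the coalgebra structure \eqref{coproduct-main} on $(C\Box_D C)_D$ through $\bar\beta$ to $C_D\ot H$ and read off what $\kappa$ does in those coordinates.

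First I would make the formula for $\kappa$ explicit. Since $\bar\beta(\bar c\ot h)=\overline{c\ps{1}\Co_D c\ps{2}\triangleleft h}$, applying $\varepsilon\ot\Id_H$ after $\bar\beta^{-1}$ amounts to $\kappa(\overline{c\Co_D c'})=\ve(\beta_-(c\Co_D c'))\,\beta_+(c\Co_D c')$. I expect this is forced by item (iii) or (iv) of Lemma~\ref{canonicalbijection}, and I would check first that $\kappa$ descends to the quotient by $W$ in \eqref{ww}, i.e.\ that $\kappa$ annihilates the defining relations of $W$; this is where the $D$-bicomodule structure \eqref{coalgebras} and the cotensor relation enter, and it must be done before any coalgebra computation makes sense.

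For the multiplicativity-reversing statement I would compute $(\kappa\ot\kappa)\Delta(\overline{c\ot c'})$ using the coproduct \eqref{coproduct-main}, which gives $\kappa(\overline{c\ps{1}\Co_D c'\ps{2}})\ot\kappa(\overline{c\ps{2}\Co_D c'\ps{1}})$, and compare it with $\Delta_H(\kappa(\overline{c\ot c'}))=\beta_+(c\Co_D c')\ps{1}\ot\beta_+(c\Co_D c')\ps{2}$. The bridge between these two expressions is item (vii) of Lemma~\ref{canonicalbijection}, which unpacks how $\Delta_C\ot\Delta_C$ interacts with $\beta_-$ and $\beta_+$; I would apply suitable counit maps $\ve$ in the $C$-slots of (vii) to collapse the four-fold tensor into the comultiplication of $\beta_+$, thereby matching the two sides. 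The sign-reversal (the $\mathrm{cop}$) should emerge from the crossing of indices $c\ps{1}\leftrightarrow c'\ps{2}$ and $c\ps{2}\leftrightarrow c'\ps{1}$ already visible in \eqref{coproduct-main}. The counit claim $\ve_H\kappa(\overline{c\ot c'})=\ve(c)\ve(c')$ should then follow immediately from item (iv).

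The main obstacle I anticipate is not the algebra of the final comparison but the bookkeeping needed to express $\kappa$ intrinsically and verify it is well-defined on the quotient: one must track $\beta_-,\beta_+$ through the cotensor product $C\Box_D C$ and the relation $W$, using \eqref{pi} and the $H$-module coalgebra property of $C$ repeatedly, and make sure each manipulation stays inside $C\Box_D C$ (i.e.\ respects the balancing over $D$). Once $\kappa$ is pinned down and shown to descend, the identity $\Delta_H\kappa=(\kappa\ot\kappa)\Delta^{\mathrm{cop}}$ reduces to a direct application of parts (iv) and (vii) of Lemma~\ref{canonicalbijection} together with the definition of $\Delta$ in \eqref{coproduct-main}, which I expect to be routine.
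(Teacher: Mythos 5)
Your proposal is correct and takes essentially the route the paper intends: the paper itself defers the proof to \cite{hr2} (Lemma 4.5), and your computation --- explicit formula $\kappa(\ol{c\Co_D c'})=\ve(\beta_-)\beta_+$, well-definedness inherited from $\bar{\beta}$, then Lemma \ref{canonicalbijection}(vii) against the crossed coproduct \eqref{coproduct-main} --- is precisely the toolkit the paper re-runs verbatim in its subsequent proof of Lemma \ref{kappa}. One precision worth making: the four-fold tensor in (vii) is not collapsed by bare counits in the $C$-slots, but by applying $\kappa$ to the two crossed pairs and invoking Lemma \ref{canonicalbijection}(ii) in the form $\kappa(\ol{d\ps{1}\Co_D d\ps{2}\triangleleft h})=\ve(d)\,h$, which turns $\kappa(\ol{c\ps{2}\ot c'\ps{1}})\ot\kappa(\ol{c\ps{1}\ot c'\ps{2}})$ into $\beta_+\ps{1}\ot\ve(\beta_-)\beta_+\ps{2}=\Delta_H\bigl(\kappa(\ol{c\Co_D c'})\bigr)$, after which the counit identity follows from (iv) exactly as you say.
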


The anti coalgebra map property of the map $\kappa$ is equivalent to
\begin{equation}
  \kappa(\overline{c\ot c'})\ps{1}\ot \kappa(\overline{c\ot c'})\ps{2}=\kappa(\overline{c\ps{2}\ot c'\ps{1}})\ot \kappa(\overline{c\ps{1}\ot c'\ps{2}}).
\end{equation}

The following lemma introduces some properties of the map $\kappa$.

\begin{lemma}\label{kappa}
If   $C(D)^H$  be a Hopf Galois coextension then the map $\kappa$ has the following properties.
\begin{enumerate}
\item[i)]$ \kappa(\ol{c\ps{1} \triangleleft h \Co_D c\ps{2} \triangleleft g})=\ve(c)S(h) g, \quad  c\in C,\; g,h\in H,$
\item[ii)] $ \kappa(\ol{c\Co_D c'})h=\kappa(\ol{c\Co_D c'\triangleleft h}), \quad h\in H,$
\item[iii)] $ \kappa(\ol{c\triangleleft h\Co_D c'})=S(h)\kappa(\ol{c\Co_D c'}), \quad h\in H,$
\item[iv)] $ c\ps{1}\triangleleft \kappa(\ol{c\ps{2}\Co_D c'})= \ve(c) c', \quad c\ot c'\in C\Co_D C$.
\end{enumerate}
\end{lemma}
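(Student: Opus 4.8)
The plan is to read all four identities off the single formula
\[
\kappa(\ol{c\Co_D c'})=\ve(\beta_{-}(c\Co_D c'))\,\beta_{+}(c\Co_D c'),
\]
which is what the definition $\kappa=(\ve\ot\Id_H)\circ\bar\beta^{-1}$ becomes once one observes that $\bar\beta^{-1}(\ol{c\Co_D c'})=\ol{\beta_{-}(c\Co_D c')}\ot\beta_{+}(c\Co_D c')$ (apply $\bar\beta$ to the right-hand side and use $\beta\beta^{-1}=\Id$). Two of the items are then immediate. For (ii), Lemma \ref{canonicalbijection}(v) gives $\beta_{-}(c\Co_D c'\triangleleft h)\ot\beta_{+}(c\Co_D c'\triangleleft h)=\beta_{-}(c\Co_D c')\ot\beta_{+}(c\Co_D c')h$, so applying $\ve\ot\Id_H$ yields $\kappa(\ol{c\Co_D c'\triangleleft h})=\kappa(\ol{c\Co_D c'})h$. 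For (iv), I would begin from Lemma \ref{canonicalbijection}(vi), apply $\Id\ot\ve\ot\Id$ to collapse the middle leg to
\[
\beta_{-}(c\Co_D c')\ot\beta_{+}(c\Co_D c')=c\ps{1}\,\ve(\beta_{-}(c\ps{2}\Co_D c'))\ot\beta_{+}(c\ps{2}\Co_D c'),
\]
and then apply the action map $\triangleleft\colon C\ot H\to C$. By Lemma \ref{canonicalbijection}(iii) the left side becomes $\ve(c)c'$, while the right side is exactly $c\ps{1}\triangleleft\kappa(\ol{c\ps{2}\Co_D c'})$, giving (iv).

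The heart of the lemma is (iii), and the main obstacle is that acting by $h$ on the \emph{first} leg of $C\Co_D C$ is not one of the structure maps: $\beta$ is a left $C$-comodule and right $H$-module map with the $H$-action carried on the second leg, and $c\triangleleft h\Co_D c'$ is genuinely not $\beta$ of a single element. The way around this is to exhibit the inverse explicitly, letting the antipode absorb the mismatch. Writing $p=\beta_{-}(c\Co_D c')$ and $q=\beta_{+}(c\Co_D c')$, I claim
\[
\beta^{-1}(c\triangleleft h\Co_D c')=(p\triangleleft h\ps{1})\ot S(h\ps{2})\,q.
\]
To verify this I would apply $\beta$ to the right-hand side: using that $C$ is a right $H$-module coalgebra this equals $(p\ps{1}\triangleleft h\ps{1})\Co_D p\ps{2}\triangleleft\big(h\ps{2}S(h\ps{3})q\big)$, and the antipode identity $h\ps{1}\ot h\ps{2}S(h\ps{3})=h\ot 1_H$ collapses it to $(p\ps{1}\triangleleft h)\Co_D(p\ps{2}\triangleleft q)$; finally Lemma \ref{canonicalbijection}(i), which reads $p\ps{1}\ot p\ps{2}\triangleleft q=c\ot c'$, turns this into $c\triangleleft h\Co_D c'$. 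Applying $\ve\ot\Id_H$ to the displayed inverse and using $\ve(h\ps{1})S(h\ps{2})=S(h)$ then gives
\[
\kappa(\ol{c\triangleleft h\Co_D c'})=\ve(p)\,S(h)\,q=S(h)\,\kappa(\ol{c\Co_D c'}),
\]
which is (iii).

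Finally (i) follows from (iii) with essentially no extra work. Applying (iii) to the first leg of $c\ps{1}\triangleleft h\Co_D c\ps{2}\triangleleft g$ pulls out $S(h)$ and leaves $\kappa(\ol{c\ps{1}\Co_D c\ps{2}\triangleleft g})$; but $c\ps{1}\Co_D c\ps{2}\triangleleft g=\beta(c\ot g)$, so Lemma \ref{canonicalbijection}(ii) identifies its $\beta^{-1}$ as $c\ot g$, whence $\kappa(\ol{c\ps{1}\Co_D c\ps{2}\triangleleft g})=\ve(c)g$ and therefore $\kappa(\ol{c\ps{1}\triangleleft h\Co_D c\ps{2}\triangleleft g})=\ve(c)S(h)g$. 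The only delicate point in the whole argument is the antipode bookkeeping producing the explicit inverse in (iii); everything else is a direct application of Lemma \ref{canonicalbijection} together with the counit and antipode axioms.
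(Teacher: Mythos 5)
Your proposal is correct, and on the central item (iii) it takes a genuinely different route from the paper; items (ii) and (iv) coincide with the paper's argument (right $H$-linearity of $\beta^{-1}$ for (ii), and for (iv) applying $\Id\ot\ve\ot\Id$ to Lemma \ref{canonicalbijection}(vi), acting by $\triangleleft$ and invoking Lemma \ref{canonicalbijection}(iii)). For (iii) the paper works in the convolution algebra $\Hom_k(H\ot (C\Co_D C)_D,H)$ and shows, in four separate computations, that the two sides $m_H\circ(S\ot\kappa)$ and $\kappa\circ(\triangleleft\ot\Id_C)\circ(tw\ot\Id_C)$ share the two-sided convolution inverse $m_H\circ tw\circ(\Id_H\ot[S^{-1}\circ\kappa])$; that machinery consumes the anti-coalgebra property of $\kappa$, Lemma \ref{canonicalbijection}(vii), item (i) itself, and --- through $S^{-1}$ --- tacitly assumes the antipode is bijective. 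You instead produce the preimage explicitly: writing $\beta^{-1}(c\Co_D c')=p\ot q$, you check that $\beta$ sends $(p\triangleleft h\ps{1})\ot S(h\ps{2})q$ to $(p\ps{1}\triangleleft h)\Co_D(p\ps{2}\triangleleft q)=(c\triangleleft h)\Co_D c'$, the last step being Lemma \ref{canonicalbijection}(i) hit with the linear map $(\triangleleft h)\ot\Id_C$ (and $\pi(c\triangleleft h)=\ve(h)\pi(c)$ guarantees $c\triangleleft h\ot c'\in C\Co_D C$, so the statement parses); injectivity of $\beta$ together with $\ve(p\triangleleft h\ps{1})=\ve(p)\ve(h\ps{1})$ then yields (iii) using only the module-coalgebra axioms, the antipode identity and Lemma \ref{canonicalbijection}(i). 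Your logical order is consequently inverted but non-circular: the paper proves (i) first, by inserting $h\ps{2}S(h\ps{3})$ and using Lemma \ref{canonicalbijection}(ii), and then feeds (i) into its convolution computations for (iii), whereas you deduce (i) from (iii) plus $\kappa(\ol{c\ps{1}\Co_D c\ps{2}\triangleleft g})=\ve(c)g$, and your proof of (iii) never uses (i). What your route buys is a short elementary proof of the key identity, with no convolution formalism, no appeal to the anti-coalgebra property of $\kappa$, and no invertibility assumption on $S$; what the paper's route buys is a technique that transports to settings such as the $\times$-Hopf coalgebras of \cite{hr2}, where an explicit formula for the translated inverse is less readily available. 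Finally, your opening normalization $\kappa(\ol{c\Co_D c'})=\ve(\beta_-(c\Co_D c'))\,\beta_+(c\Co_D c')$ is exactly how the paper itself computes with $\kappa$ (it appears in its proof of (iv)), and it is well defined on classes because $\ve$ descends to $C_D$.
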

\begin{proof}
The following computation proves  the relation i).
\begin{align*}
&\kappa(c\ps{1} \triangleleft h \Co_D c\ps{2} \triangleleft g)=\kappa(c\ps{1} \lt h\ps{1} \Co_D c\ps{2} \lt \ve(h\ps{2})g)\\
&=\kappa(c\ps{1} \lt h\ps{1} \Co_D c\ps{2} \lt h\ps{2}S(h\ps{3})g)=\kappa(c\ps{1} \lt h\ps{1} \Co_D (c\ps{2} \lt h\ps{2})\lt S(h\ps{3})g)\\
&=\kappa((c \lt h\ps{1})\ps{1} \Co_D (c \lt h\ps{1})\ps{2}\lt S(h\ps{2})g)=\ve(c \lt h\ps{1})S(h\ps{2})g\\
&=\ve(c)\ve(h\ps{1})S(h\ps{2})g=\ve(c)S(h)g.
\end{align*}
We used Lemma \ref{canonicalbijection}(ii) in the fifth equality. The relation ii) is obvious by the right $H$-module map property of $\beta^{-1}$ in Lemma \ref{canonicalbijection}(v). To prove iii), it is enough to show that the maps $m_H\circ (S_H\ot \kappa)$ and $\kappa\circ (\triangleleft\ot \Id_C)\circ(tw\ot \Id_C)$, appearing in both hand sides of iii),  have the same inverse in the convolution algebra ${\rm Hom_k(H\ot C\Box_D C,H)}$. Here $m_H$ denotes the multiplication map of $H$. To do this, first we show that the map ${m_H \circ tw\circ(\Id_H \ot  [S^{-1}\circ \kappa])}$ is a left inverse for ${ m_H\circ(S \ot\kappa)}$, with respect to the convolution product denoted by $\star$. Let ${h\ot \ol{ c \ot c' }\in H\ot (C \Box_D C)_D}$. We have;
\begin{align*}
&[m_H \circ tw\circ(\Id_H \ot  [S^{-1}\circ \kappa])\star m_H\circ(S \ot\kappa)](h\ot \ol{c \ot c'})\\
&=S^{-1}(\kappa(\ol{c\ps{1}\ot c'\ps{2}})) h\ps{1} S(h\ps{2})\kappa(\ol{c\ps{2}\ot c'\ps{1}})\\
&=\ve(h) S^{-1}(\kappa(\ol{c\ps{1}\ot c'\ps{2}})) \kappa(\ol{c\ps{2}\ot c'\ps{1}})\\
&=\ve(h)S^{-1}[\kappa((\ol{c\ot c'})\ps{1})]\kappa((\ol{c\ot c'})\ps{2})\\
&=\ve(h)S^{-1}[\kappa(\ol{c\ot c'})\ps{2}]\kappa(\ol{c\ot c'})\ps{1}\\
&=\ve(h)\ve(\kappa(\ol{c\ot c'}))\\
&=\ve(h)\ve(\kappa(\ol{c\ps{1}\ot c'\ps{2}}))\ve(\kappa(\ol{c\ps{2}\ot c'\ps{1}}))\\
&= \ve(h)\ve(c)\ve(c')1_H=\eta \circ \ve (h\ot \ol{c \ot c'}).
\end{align*}
Here $\eta$  is the unit map of $H$. In the previous calculation, we use the anti coalgebra map property of $\kappa$ in the fourth equality and \ref{canonicalbijection}(iv) in the seventh equality. Now we show that the map ${m_H \circ tw\circ(\Id_H \ot  [S^{-1}\circ \kappa])}$ is a right inverse for the map ${ m_H\circ(S \ot\kappa)}$. For the convenience in the following computation, we denote $\beta^{-1}(c\ot d)=\beta_- \ot \beta_+$.

\begin{align*}
  &[ m_H\circ(S \ot\kappa) \star m_H \circ tw\circ(\Id_H \ot  [S^{-1}\circ \kappa])](h\ot \ol{c \ot c'})\\
  & =S(h\ps{1})\kappa(\ol{c\ps{1}\ot c'\ps{2}}) S^{-1}(\kappa(\ol{c\ps{2}\ot c'\ps{1}}))h\ps{2}\\
  &=S(h\ps{1})\kappa(\ol{\beta_-\ps{1}\ot \beta_-\ps{4}\triangleleft \beta_+\ps{2}}) S^{-1}(\kappa(\ol{\beta_-\ps{2}\ot \beta_-\ps{3}\triangleleft \beta_+\ps{1}}))h\ps{2}\\
  &=S(h\ps{1})\kappa(\ol{\beta_-\ps{1}\ot \beta_-\ps{3}\triangleleft \beta_+\ps{2}}) S^{-1}(\ol{\ve(\beta_-\ps{2})\beta_+\ps{1}})h\ps{2}\\
  &=S(h\ps{1})\kappa(\ol{\beta_-\ps{1}\ot \beta_-\ps{2}\triangleleft \beta_+\ps{2}}) S^{-1}(\ol{\beta_+\ps{1}})h\ps{2}\\
 & =S(h\ps{1})\ve(\beta_-)\beta_+\ps{2} S^{-1}(\ol{\beta_+\ps{1}})h\ps{2}=\ve(\beta_-)\ve(\beta_+) S(h\ps{1})h\ps{2}\\
  &=\ve(c)\ve(c')\ve(h)1_H= \eta\circ \ve(h\ot \ol{c\ot c'}).\\
\end{align*}
We used Lemma \ref{canonicalbijection}(vii) in the second equality, Lemma \ref{canonicalbijection}(ii) and definition of $\kappa$ in the third  and fifth equalities and Lemma \ref{canonicalbijection}(iv) in the penultimate equality.
Therefore we have shown that ${m_H \circ tw\circ(\Id_H \ot  [S^{-1}\circ \kappa])}$ is a two-sided inverse for ${ m_H\circ(S \ot\kappa)}$.
Now we check  ${m_H \circ tw\circ(\Id_H \ot  [S^{-1}\circ \kappa])}$ is  a left inverse for the map ${ \kappa\circ(\triangleleft\ot \Id_C)\circ(tw\ot \Id_C)}$ with respect to the convolution product.
\begin{align*}
&[m_H \circ tw\circ(H \ot  [S^{-1}\circ \kappa])\star \kappa\circ(\triangleleft\ot \Id_C)\circ(tw\ot \Id_C)](h\ot \ol{c \ot c'})\\
&=m_H \circ tw\circ(h\ps{1} \ot  S^{-1}(\kappa(c\ps{1} \ot c'\ps{2})))\kappa\circ(\triangleleft\ot C)(c\ps{2}\ot h\ps{2}\ot c'\ps{1})\\
&=S^{-1}(\kappa(c\ps{1} \ot c'\ps{2}))h\ps{1} \kappa(c\ps{2}\lt h\ps{2}\ot c'\ps{1})\\
&=S^{-1}(\kappa(\ol{\beta_-\ps{1} \ot \beta_-\ps{4}\lt \beta_+\ps{2}}))h\ps{1}\kappa(\ol{\beta_-\ps{2}\lt h\ps{2} \ot \beta_-\ps{3}\lt \beta_+\ps{1}})\\
&=S^{-1}(\kappa(\ol{\beta_-\ps{1} \ot \beta_-\ps{3}\lt \beta_+\ps{2}}))h\ps{1}\ve(\beta_-\ps{2})S(h\ps{2})\beta_+\ps{1}\\
&=S^{-1}(\kappa(\ol{\beta_-\ps{1} \ot \beta_-\ps{2}\lt \beta_+\ps{2}}))\ve(h)\beta_+\ps{1}\\
&=S^{-1}(\ve(\beta_-)\beta_+\ps{2})\ve(h)\beta_+\ps{1}\\
&=\ve(\beta_-)\ve(h)\ve(\beta_+)1_H=\ve(c)\ve(h)\ve(c')1_H=\iota \circ \ve (h\ot \ol{c \ot c'}).
\end{align*}
We used  Lemma \ref{canonicalbijection}(vii) in the third equality,  Lemma \ref{kappa}(i) in the fourth equality, definition of $\kappa$ and Lemma \ref{canonicalbijection}(ii) in the sixth equality and Lemma \ref{canonicalbijection}(iv) in the eighth equality.
Here we show that $m_H \circ tw\circ(\Id_H \ot  [S^{-1}\circ \kappa])$ is a right inverse for $\kappa\circ(\triangleleft\ot \Id_C)\circ(tw\ot \Id_C)$.

\begin{align*}
&[\kappa\circ(\triangleleft\ot \Id_C)\circ(tw\ot \Id_C)\star m_H \circ tw\circ(\Id_H \ot  [S^{-1}\circ \kappa])](h\ot \ol{c \ot c'})\\
&=\kappa(c\ps{1}\triangleleft h\ps{1} \ot c'\ps{2})S^{-1}(\kappa(c\ps{2}\ot c'\ps{1}))h\ps{2}\\
&=\kappa(\ol{\beta_-\ps{1}\lt h\ps{1}  \ot \beta_-\ps{4}\lt \beta_+\ps{2}})S^{-1}(\kappa(\ol{\beta_-\ps{2} \ot \beta_-\ps{3}\lt \beta_+\ps{1}}))h\ps{2}\\
&=\kappa(\ol{\beta_-\ps{1}\lt h\ps{1}  \ot \beta_-\ps{3}\lt \beta_+\ps{2}})S^{-1}(\ve(\beta_-\ps{2})\beta_+\ps{1})h\ps{2}\\
&=\kappa(\ol{\beta_-\ps{1}\triangleleft h\ps{1}  \ot \beta_-\ps{2}\lt \beta_+\ps{2}}))S^{-1}(\beta_+\ps{1})h\ps{2}\\
&=\ve(\beta_-)S(h\ps{1})\beta_+\ps{2}S^{-1}(\beta_+\ps{1})h\ps{2}\\
&=\ve(\beta_-)\ve(\beta_+)S(h\ps{1})h\ps{2}\\
&=\ve(c)\ve(c')\ve(h)1_H=\iota \circ \ve (h\ot \ol{c \ot c'}).
\end{align*}
 We used Lemma \ref{canonicalbijection}(vii) in the second equality,  Lemma \ref{kappa}(i) and the definition of $\kappa$ in the third equality, Lemma \ref{kappa}(i) in the fifth equality and Lemma \ref{canonicalbijection}(iv) in the penultimate equality.
 Therefore ${ m_H \circ tw\circ(\Id_H \ot  [S^{-1}\circ \kappa])}$ is a two-sided inverse for ${ \kappa\circ(\triangleleft\ot \Id_C)\circ(tw\ot \Id_C)}$ with respect to the convolution product. Therefore we have shown
$$\kappa\circ(\mu_C\ot C)\circ(tw\ot C)=\mu_H\circ(S \ot\kappa).$$ To prove the relation iv), we apply $ \Id\ot \ve \ot \Id$ on the both hand sides of  Lemma \ref{canonicalbijection}(vi) and therefore we obtain;
\begin{align}\label{v-v-v}\nonumber
& \b_-(\overline{c\ot c'})\ot \b_+(\overline{c\ot c'})= \ve(\b_-(\overline{c\ps{2}\ot c'}))c\ps{1}\ot \b_+(\overline{c\ps{2}\ot c'})\\
 &= c\ps{1}\ot \kappa(\overline{c\ps{2}\ot c'}).
\end{align}
By applying the right action of $H$  on the previous equation we obtain;
\begin{align*}
 & c\ps{1}\triangleleft \kappa(\overline{c\ps{2}\ot c'})= \ve(\b_-(\overline{c\ps{2}\ot c'}))c\ps{1}\triangleleft \b_+(\overline{c\ps{2}\ot c'})\\
 &= \b_-(\overline{c\ot c'})\triangleleft \b_+(\overline{c\ot c'})=\ve(c)c'.
\end{align*}
We used the Lemma \ref{canonicalbijection}(iii) on the last equality.

\end{proof}

\begin{lemma}\label{lemma C^D}
If $C(D)^H$  be a Hopf Galois coextension, then  $C^D$ is a right $(C\Box_D C)_D$-comodule and a left $H$-comodule by the following coactions,
\begin{align*}
  \blacktriangledown^{C^D}:C^D\longrightarrow C^D\ot (C\Box_DC)_D, \quad c\longmapsto c\ps{2} \ot \ol{c\ps{3}\Co_D c\ps{1}},
\end{align*}
 and
 \begin{align}\label{SAYDcoaction}
 \triangledown^{C^D}:C^D\longrightarrow H\ot C^D, \quad c\longmapsto \kappa(\ol{c\ps{3}\Co_D c\ps{1}})\ot c\ps{2}.
 \end{align}
 \end{lemma}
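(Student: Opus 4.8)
The plan is to prove the right $(C\Box_D C)_D$-comodule structure $\blacktriangledown^{C^D}$ first and then obtain the left $H$-comodule structure $\triangledown^{C^D}$ from it almost for free, using that the two coactions are related by $\triangledown^{C^D}=\mathrm{tw}\circ(\Id\ot\kappa)\circ\blacktriangledown^{C^D}$, where $\mathrm{tw}$ is the flip and $\kappa$ is the anti-coalgebra map of the previous lemma. I would first record the elementary fact that any anti-coalgebra map $\kappa\colon(C\Box_D C)_D\to H$ sends a right $(C\Box_D C)_D$-comodule $N$, with coaction $n\mapsto n\ns0\ot n\ns1$, to a left $H$-comodule via $n\mapsto\kappa(n\ns1)\ot n\ns0$: coassociativity follows from $\Delta_H\circ\kappa=(\kappa\ot\kappa)\circ\mathrm{tw}\circ\Delta$ together with coassociativity of the given coaction, and counitality from $\ve_H\circ\kappa=\ve$. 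Applying this with $N=C^D$ reduces the whole statement to part one.

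For the $(C\Box_D C)_D$-comodule structure the substantive point is well-definedness: that for $c\in C^D$ the element $\sum c\ps2\ot\ol{c\ps3\Co_D c\ps1}$ indeed lies in $C^D\ot(C\Box_D C)_D$. This splits into two checks. First, the pair $c\ps3\ot c\ps1$, with $c\ps2$ as spectator, must lie in the cotensor product $C\Box_D C$; spelling out the cotensor condition through the right and left $D$-coactions of $C$ turns this into the identity $\sum c\ps2\ot c\ps3\ot\pi(c\ps4)\ot c\ps1=\sum c\ps3\ot c\ps4\ot\pi(c\ps1)\ot c\ps2$. Second, the left leg must be $D$-coinvariant, which reduces to a similar cyclic identity among the legs of the iterated coproduct. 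I would derive both from the defining relation \eqref{TS} of $C^D$ by applying $\Delta_C$ to it, which shifts the position of the surviving $\pi$-factor, while the relations \eqref{ww} defining $W$ are exactly what let the residual $\pi$-twists be transferred between the two legs of $\ol{c\ps3\Co_D c\ps1}$. I expect this bookkeeping — tracking which coproduct leg carries $\pi$ and in which tensor slot, and invoking \eqref{TS} in iterated form — to be the main obstacle; the rest is formal.

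Coassociativity and counitality of $\blacktriangledown^{C^D}$ are then routine and are precisely encoded in the coalgebra structure \eqref{coproduct-main}. Expanding $(\blacktriangledown^{C^D}\ot\Id)\blacktriangledown^{C^D}(c)=\sum c\ps3\ot\ol{c\ps4\Co_D c\ps2}\ot\ol{c\ps5\Co_D c\ps1}$ and comparing with $(\Id\ot\Delta)\blacktriangledown^{C^D}(c)$, where $\Delta(\ol{c\ps3\Co_D c\ps1})=\ol{c\ps4\Co_D c\ps2}\ot\ol{c\ps5\Co_D c\ps1}$ by \eqref{coproduct-main}, shows the two agree by coassociativity of $\Delta_C$. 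Counitality uses $\ve(\ol{c\ot c'})=\ve(c)\ve(c')$, giving $\ve(c\ps3)\ve(c\ps1)c\ps2=c$. Finally, with part one established, the counit of the induced left $H$-coaction is $\ve_H(\kappa(\ol{c\ps3\Co_D c\ps1}))c\ps2=\ve(c\ps3)\ve(c\ps1)c\ps2=c$ since $\ve_H\circ\kappa=\ve$, completing the verification.
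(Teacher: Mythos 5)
Your proposal is correct and takes essentially the same route as the paper's proof: the same two well-definedness checks (the cotensor identity obtained by iterating \eqref{TS} over a field, and the transfer of the $\varphi\circ\pi$ factors between the two legs via the $W$-relations \eqref{ww}), the same coassociativity and counitality computations using \eqref{coproduct-main}, and the $H$-comodule structure resting on the anti-coalgebra property $\Delta_H\circ\kappa=(\kappa\ot\kappa)\circ \mathrm{tw}\circ\Delta$ together with $\ve_H\circ\kappa=\ve$. Your only departure is organizational: you abstract the second half into a general ``an anti-coalgebra map transports a right comodule to a left comodule'' lemma, whereas the paper verifies coassociativity and counitality of $\triangledown^{C^D}$ by direct computation --- the underlying steps are identical.
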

\begin{proof}
First we show that the coaction $\blacktriangledown^{C^D}$ is well-defined.
The following computation proves $c\ps{2}\ot \overline{c\ps{3}\ot c\ps{1}}\in C^D\ot C\ot C$.
  \begin{align*}
    &c\ps{3}\varphi(\pi(c\ps{2}))\ot \overline{c\ps{4}\ot c\ps{1}}=c\ps{3}\ot \overline{c\ps{4}\ot c\ps{1}\varphi(\pi(C\ps{2}))}\\
    &=c\ps{3}\ot \overline{c\ps{4}\ps{2}\varphi(\pi(c\ps{4}\ps{1})\ot \ve(c\ps{1})c\ps{2}}=c\ps{2}\varphi(\pi(c\ps{3}))\ot \overline{c\ps{4}\ot c\ps{1}}.
  \end{align*}
  We used the definition of $W$  given in \eqref{ww}.  Furthermore since $C$ is a coalgebra on a field then $c\in C^D$ implies   $c\ps{1}\ot \pi(c\ps{2})=  c\ps{2}\ot \pi(c\ps{1})$. Therefore we have;
  \begin{equation}
    c\ps{1}\ot c\ps{2}\ot c\ps{3}\ot \pi(c\ps{4})= c\ps{2}\ot c\ps{3}\ot c\ps{4}\ot \pi(c\ps{1}),
  \end{equation}
  which implies
  $$c\ps{2}\ot \overline{c\ps{3}\ot c\ps{1}}\in C\ot C\Co_D C.$$
The following computation shows  that $\blacktriangledown^{C^D}$ defines a coassociative coaction.
\begin{align*}
&[\blacktriangledown^{C^D}\ot \Id_{(C\Box_DC)_D}]\circ \blacktriangledown^{C^D}(c)=(\blacktriangledown^{C^D}\ot \Id_{(C\Box_DC)_D})(c\ps{2} \ot \ol{c\ps{3}\ot c\ps{1}})\\
&=(c\ps{3}\ot\ol{c\ps{4}\ot c\ps{2}} \ot \ol{c\ps{5}\ot c\ps{1}})=c\ps{2} \ot \ol{c\ps{3}\ps{1}\ot c\ps{1}\ps{2}}\ot \ol{c\ps{3}\ps{2}\ot c\ps{1}\ps{1}}\\
&=(\Id_{C^D}\ot\D)(c\ps{2} \ot \ol{c\ps{3}\ot c\ps{1}})=(\Id_{C^D}\ot\D)\circ \blacktriangledown^{C^D}(c).
\end{align*}
The counitality of the coaction induced by $\blacktriangledown^{C^D}$ can be shown  as follows.
$$(\Id_{C^D}\ot \ve)\circ\varrho^{C^D}(c)=(\Id_{C^D}\ot \ve)(c\ps{2} \ot \ol{c\ps{3}\ot c\ps{1}})=c\ps{2}\ve(c\ps{3})\ve(c\ps{1})=\Id_{C^D}(c).$$
Therefore $C^D$ is a right coassociative and counital $ (C\Box_DC)_D$-comodule.\\
The following computation shows that $\triangledown^{C^D}$ defines a coassociative $H$-coaction on $C^D$.
\begin{align*}
&(\Id_H\ot \triangledown^{C^D})\circ \triangledown^{C^D}(c)=(\Id_H\ot \triangledown^{C^D})(\kappa(\ol{c\ps{3}\ot c\ps{1}})\ot c\ps{2})\\
&=\kappa(\ol{c\ps{5}\ot c\ps{1}})\ot \kappa(\ol{c\ps{4}\ot c\ps{2}}) \ot c\ps{3}\\
&=\kappa(\ol{c\ps{3}\ot c\ps{1}})\ps{1}\ot \kappa(\ol{c\ps{3}\ot c\ps{1}})\ps{2} \ot c\ps{2}\\
&=(\D\ot \Id_H)(\kappa(\ol{c\ps{3}\ot c\ps{1}})\ot c\ps{2})=(\D\ot \Id_H)\circ \triangledown^{C^D}(c).
\end{align*}
We used the  anti coalgebra map property of $\kappa$ in the third equality. To prove the counitality of the coaction for all $c \in C^D$  we have;
\begin{align*}
&(\ve\ot \Id_{C^D})\circ\varrho^{C^D}(c)=(\ve\ot \Id_{C^D})(\kappa(\ol{c\ps{3}\ot c\ps{1}})\ot c\ps{2})=\\
&\ve\circ\kappa(\ol{c\ps{3}\ot c\ps{1}})\ot c\ps{2}=\ve(c\ps{3})\ve(c\ps{1})\ot c\ps{2}=id_{C^D}(c).
\end{align*}
We used the  anti coalgebra map property of $\kappa$ in the third equality. Therefore $C^D$ is a left $H$-comodule.
\end{proof}

\begin{lemma}
  Let $C(D)^H$  be Hopf Galois coextension. Then  $C_D$ is a left $(C\Box_D C)_D$-comodule and a right $H$-comodule by the following coactions,
\begin{align*}
  \blacktriangledown^{C_D}:C_D\longrightarrow  (C\Box_DC)_D\ot C_D, \quad c\longmapsto \ol{c\ps{1}\Co_D c\ps{3}}\ot c\ps{2},
\end{align*}
 and
 \begin{align}\label{SAYDcoaction}
 \triangledown^{C_D}:C_D\longrightarrow  C_D\ot H, \quad c\longmapsto  c\ps{2}\ot  \kappa(\ol{c\ps{1}\Co_D c\ps{3}}).
 \end{align}
\end{lemma}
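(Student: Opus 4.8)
The plan is to observe that this statement is the exact left--right mirror of Lemma~\ref{lemma C^D}, and to transcribe that proof, reversing the roles of left and right throughout. The one structural difference is that $C_D$ is a \emph{quotient} of $C$ whereas $C^D$ was a \emph{subspace}; consequently every verification that an expression ``lands in the target'' in Lemma~\ref{lemma C^D} now becomes a verification that the defining formula ``descends through the relations'' cutting out $C_D$ and $(C\Box_D C)_D$. The two ingredients I would draw on are the coalgebra structure of $(C\Box_D C)_D$ recorded in \eqref{coproduct-main} and the anti coalgebra map property of $\kappa$; the four steps to carry out are well-definedness, coassociativity and counitality of $\blacktriangledown^{C_D}$, and then the same three properties for $\triangledown^{C_D}$.

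First I would treat $\blacktriangledown^{C_D}$, where the substantive work lies. The task is to show that $c\mapsto\ol{c\ps{1}\Co_D c\ps{3}}\ot c\ps{2}$ is a well-defined map $C_D\longrightarrow(C\Box_D C)_D\ot C_D$; concretely, that the triple $c\ps{1}\ot c\ps{2}\ot c\ps{3}$ projects to a legitimate element of $(C\Box_D C)_D\ot C_D$ and that the assignment annihilates the generators $c\ps{1}\varphi(\pi(c\ps{2}))-c\ps{2}\varphi(\pi(c\ps{1}))$ of the subspace defining $C_D$. As in the proof of Lemma~\ref{lemma C^D}, this is done by a Sweedler computation that transports the $\pi$-coactions across the tensor symbol and then invokes the defining relation \eqref{ww} of $(C\Box_D C)_D$ together with the relation cutting out $C_D$. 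I expect this descent to be the main obstacle: exactly as the subspace identity $c\ps{1}\ot\pi(c\ps{2})=c\ps{2}\ot\pi(c\ps{1})$ was indispensable for $C^D$, here it is the relations defining the two quotients that make the cotensor expression meaningful, and they must be applied in the correct order.

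Once well-definedness is secured, coassociativity and counitality are formal and follow verbatim from the corresponding steps of Lemma~\ref{lemma C^D}. For $\blacktriangledown^{C_D}$ I would apply $\Delta\ot\Id$ using the coproduct $\Delta(\ol{c\ot c'})=\ol{c\ps{1}\Co_D c'\ps{2}}\ot\ol{c\ps{2}\Co_D c'\ps{1}}$ of \eqref{coproduct-main} and reindex the five resulting Sweedler legs to match $(\Id\ot\blacktriangledown^{C_D})\circ\blacktriangledown^{C_D}$, while counitality uses $\ve(\ol{c\ot c'})=\ve(c)\ve(c')$. For the right $H$-coaction $\triangledown^{C_D}(c)=c\ps{2}\ot\kappa(\ol{c\ps{1}\Co_D c\ps{3}})$, well-definedness is inherited from that of $\blacktriangledown^{C_D}$ by post-composing with $\kappa$ on the $(C\Box_D C)_D$-factor; coassociativity then reduces to the anti coalgebra map identity $\kappa(\ol{c\ot c'})\ps{1}\ot\kappa(\ol{c\ot c'})\ps{2}=\kappa(\ol{c\ps{2}\ot c'\ps{1}})\ot\kappa(\ol{c\ps{1}\ot c'\ps{2}})$, and counitality to $\ve_H\circ\kappa=\ve$ combined with $\ve(\ol{c\ot c'})=\ve(c)\ve(c')$. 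I expect no new difficulty here beyond the quotient well-definedness analyzed for the first coaction.
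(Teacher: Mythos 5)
Your proposal matches the paper's proof in all essentials: the paper likewise treats this lemma as the mirror of Lemma \ref{lemma C^D}, proves coassociativity of $\blacktriangledown^{C_D}$ by the same Sweedler-leg reindexing with the coproduct \eqref{coproduct-main}, treats counitality as immediate, and obtains the right $H$-coaction by composing with the anti coalgebra map $\kappa$ exactly as you describe. The only divergence is at well-definedness, which you flag as the main obstacle and plan to verify by an explicit descent check on the generators of $W$, whereas the paper compresses it into the single observation that $C_D=C/W$ is a coalgebra; your generator-level computation is a fleshed-out justification of that same one-line step, not a different route.
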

\begin{proof}
   Since $C_D= \frac{C}{W}$ is a coalgebra  this coaction is well-defined. The following computation shows that this coaction is coassociative.
  \begin{align*}
  &(\Delta_{{C\Co_D C}_D}\ot \Id_C)\circ \blacktriangledown^{C_D}(c)\\
    &=(\overline{t\ps{1}\ot t\ps{3}})\ps{1}\ot (\overline{t\ps{1}\ot t\ps{3}})\ps{2}\ot t\ps{2}=t\ps{1}\ps{1}\ot t\ps{3}\ps{2}\ot t\ps{1}\ps{2}\ot t\ps{3}\ps{1}\ot t\ps{2}\\
    &=t\ps{1}\ot t\ps{5}\ot t\ps{2}\ot t\ps{4}\ot t\ps{3}= t\ps{1}\ot t\ps{3}\ot t\ps{2}\ps{1}\ot t\ps{2}\ps{3}\ot t\ps{2}\ps{2}\\
    &=(\Id_{{C\Co_D C}_{D}}\ot \blacktriangledown_{C})\circ \blacktriangledown_{C}.
  \end{align*}
  The counitality of the coaction is obvious. Similar to Lemma \ref{lemma C^D} we can show that the anti coalgebra map $\kappa$ turns this coaction to a right coaction of $H$ on $C_D$.
\end{proof}
 The following theorem shows that the Hopf Galois coextensions of coalgebras are the sources of stable anti Yetter-Drinfeld modules.

\begin{theorem}\label{main01}
Let ${ C(D)^H}$ be a Hopf Galois coextension. Then $ C^D$ is a right-left SAYD module over $H$ by the coaction $\triangledown^{C^D}$ defined in  \eqref{SAYDcoaction} and the action $\blacktriangleleft$ defined in Lemma \ref{SAYDaction}.
\end{theorem}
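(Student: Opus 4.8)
The plan is to note that both structures named in the statement are already available: Lemma~\ref{SAYDaction} equips $C^D$ with the right $H$-action $\blacktriangleleft$ (the restriction of $\triangleleft$), and Lemma~\ref{lemma C^D} equips it with the left $H$-coaction
\begin{equation*}
\triangledown^{C^D}(c)=c\ns{-1}\ot c\ns{0}=\kappa(\ol{c\ps{3}\Co_D c\ps{1}})\ot c\ps{2}.
\end{equation*}
So the only two things I must check are the right-left anti-Yetter--Drinfeld compatibility
\begin{equation*}
(c\blacktriangleleft h)\ns{-1}\ot(c\blacktriangleleft h)\ns{0}=S(h\ps{3})\,c\ns{-1}\,h\ps{1}\ot c\ns{0}\blacktriangleleft h\ps{2},
\end{equation*}
and stability, which in this right-left setting is the identity $c\ns{0}\blacktriangleleft c\ns{-1}=c$, i.e. $c\ps{2}\triangleleft\kappa(\ol{c\ps{3}\Co_D c\ps{1}})=c$.

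For the compatibility I would expand the left side directly. Since $C$ is a right $H$-module coalgebra, $\Delta^{(2)}(c\triangleleft h)=c\ps{1}\triangleleft h\ps{1}\ot c\ps{2}\triangleleft h\ps{2}\ot c\ps{3}\triangleleft h\ps{3}$, so
\begin{equation*}
(c\blacktriangleleft h)\ns{-1}\ot(c\blacktriangleleft h)\ns{0}=\kappa(\ol{c\ps{3}\triangleleft h\ps{3}\Co_D c\ps{1}\triangleleft h\ps{1}})\ot c\ps{2}\triangleleft h\ps{2}.
\end{equation*}
Then Lemma~\ref{kappa}(iii) pulls $S(h\ps{3})$ out of the first argument of $\kappa$, and Lemma~\ref{kappa}(ii) turns the remaining action on the second argument into right multiplication by $h\ps{1}$, producing $S(h\ps{3})\kappa(\ol{c\ps{3}\Co_D c\ps{1}})h\ps{1}\ot c\ps{2}\triangleleft h\ps{2}$, which is exactly $S(h\ps{3})c\ns{-1}h\ps{1}\ot c\ns{0}\blacktriangleleft h\ps{2}$. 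This is a short, mechanical computation.

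The real content is stability. I would recognise $c\ps{2}\triangleleft\kappa(\ol{c\ps{3}\Co_D c\ps{1}})$ as the left-hand side of Lemma~\ref{kappa}(iv) applied to the cotensor element $c\ps{2}\ot c\ps{1}$: reading $c\ps{2}$ as the factor that gets comultiplied (into $c\ps{2}\ot c\ps{3}$) and $c\ps{1}$ as the fixed second slot, relation (iv) returns $\ve(c\ps{2})c\ps{1}=c$. The point needing justification is that $c\ps{2}\ot c\ps{1}$ really belongs to $C\Co_D C$, equivalently $c\ps{2}\ot\pi(c\ps{3})\ot c\ps{1}=c\ps{3}\ot\pi(c\ps{1})\ot c\ps{2}$. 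I would obtain this from the coinvariance defining $C^D$ in \eqref{TS}, namely $c\ps{1}\ot\pi(c\ps{2})=c\ps{2}\ot\pi(c\ps{1})$ (valid because $D$ is a coalgebra over a field): applying $\Delta$ to the $C$-leg yields $c\ps{1}\ot c\ps{2}\ot\pi(c\ps{3})=c\ps{2}\ot c\ps{3}\ot\pi(c\ps{1})$, and a cyclic reordering of the three tensor legs is precisely the membership identity. With this in place, Lemma~\ref{kappa}(iv) gives stability at once.

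I expect the stability step to be the main obstacle: one must feed the correct Sweedler element into Lemma~\ref{kappa}(iv) and carefully justify its membership in the cotensor product by the $C^D$-coinvariance, keeping track of which leg is comultiplied. By contrast, the anti-Yetter--Drinfeld identity is essentially forced by parts (ii) and (iii) of Lemma~\ref{kappa} together with the module-coalgebra property of $C$, so I would expect it to cost little effort.
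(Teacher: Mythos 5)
Your proposal is correct and matches the paper's proof essentially step for step: the anti-Yetter--Drinfeld identity is verified via Lemma \ref{kappa}(iii), Lemma \ref{kappa}(ii) and the right $H$-module coalgebra property of $C$ (you merely run the computation from left to right where the paper runs it from right to left), and stability is obtained by feeding $c\ps{2}\ot c\ps{1}$ into Lemma \ref{kappa}(iv), exactly as in the paper. Your extra justification that $c\ps{2}\ot c\ps{1}$ lies in $C\Co_D C$ via the coinvariance \eqref{TS} over a field is the same argument the paper already records in the proof of Lemma \ref{lemma C^D}, so it adds care but not a new route.
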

\begin{proof}
The   AYD condition  holds because for all $c\in C^D$ and $ h\in H$, we have
\begin{align*}
  &S(h\ps{3})c\ns{-1}h\ps{1}\ot c\ns{0}\blacktriangleleft h\ps{2}=S(h\ps{3})\kappa(\ol{c\ps{3}\ot c\ps{1}})h\ps{1}\ot c\ps{2}\blacktriangleleft h\ps{2}\\
  &=\kappa(\ol{c\ps{3}\blacktriangleleft h\ps{3}\ot c\ps{1}})h\ps{1}\ot c\ps{2}\blacktriangleleft h\ps{2}=\kappa(\ol{c\ps{3}\blacktriangleleft h\ps{3}\ot c\ps{1}\blacktriangleleft h\ps{1}})\ot c\ps{2}\blacktriangleleft h\ps{2} \\
  &=\kappa(\ol{(c\blacktriangleleft h)\ps{3}\ot (c\blacktriangleleft h)\ps{1}})\ot (c\blacktriangleleft h)\ps{2}= \triangledown^{C^D}(c\blacktriangleleft h).
\end{align*}

We used  Lemma \ref{kappa}(iii) in the second equality, Lemma \ref{kappa}(ii) in the third equality and  right $H$-module coalgebra property of $C$ in the fourth equality. The following computation shows the stability condition.
\begin{align*}
 &c\ns{0}\blacktriangleleft c\ns{-1} = c\ps{2} \blacktriangleleft \kappa(\ol{c\ps{3}\ot c\ps{1}})=\\
 &c\ps{2}\ps{1}\blacktriangleleft \kappa(\ol{c\ps{2}\ps{2}\ot c\ps{1}})= \ve(c\ps{2})c\ps{1}= c.
\end{align*}
The penultimate equality holds by Lemma \ref{kappa}(iv).
\end{proof}
The following statement which is proved in \cite{hr2}[Remark 5.4] will be used in the sequel subsection.
\begin{lemma}
For any coalgebra coextension $\pi: C\longrightarrow D$ we have;
  $$C^D\cong D\Co_{D^e} C,$$ where the isomorphism is given by
  $$\xi( d\Co_{D^e} c)= \varepsilon_D(d)c, \quad \text{and}  \quad \xi^{-1}(c)= \pi(c\ps{1})\Co_{D^e} c\ps{2}.$$
\end{lemma}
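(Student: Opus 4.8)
The plan is to present $D\Co_{D^e}C$ concretely as an equalizer and then verify that the two displayed formulas define mutually inverse linear maps. Recall $D^e=D\ot D\cop$; I regard $C$ as a $D$-bicomodule through $c\mapsto\pi(c\ps 1)\ot c\ps 2$ and $c\mapsto c\ps 1\ot\pi(c\ps 2)$, and $D$ as the regular bicomodule over itself. Promoting $C$ to a left $D^e$-comodule by $c\mapsto(\pi(c\ps 1)\ot\pi(c\ps 3))\ot c\ps 2$ and $D$ to a right $D^e$-comodule by $d\mapsto d\ps 2\ot(d\ps 3\ot d\ps 1)$, the space $D\Co_{D^e}C$ is the set of $\sum_i d_i\ot c_i\in D\ot C$ with
\begin{equation*}
\sum_i d_i\ps 2\ot d_i\ps 3\ot d_i\ps 1\ot c_i=\sum_i d_i\ot\pi(c_i\ps 1)\ot\pi(c_i\ps 3)\ot c_i\ps 2 \tag{$\ast$}
\end{equation*}
in $D\ot D\ot D\ot C$. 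With these conventions in place, I would check well-definedness of both maps and then the two composites.

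The composite $\xi\circ\xi^{-1}$ is immediate: since $\pi$ is a coalgebra map we have $\ve_D\circ\pi=\ve_C$, whence $\xi(\pi(c\ps 1)\Co_{D^e}c\ps 2)=\ve_C(c\ps 1)c\ps 2=c$. The key observation for the rest is that the reverse composite and the well-definedness of $\xi$ both fall out of applying the counit $\ve_D$ to two legs of $(\ast)$. Applying $\ve$ to the first and third tensor factors reduces the left side of $(\ast)$ to $\sum_i d_i\ot c_i$ and the right side to $\sum_i\ve_D(d_i)\pi(c_i\ps 1)\ot c_i\ps 2$; this is exactly the statement $\xi^{-1}\circ\xi=\Id$. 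Applying $\ve$ instead to the first and second factors gives $\sum_i d_i\ot c_i=\sum_i\ve_D(d_i)\pi(c_i\ps 2)\ot c_i\ps 1$, and comparing the two identities shows that $e:=\sum_i\ve_D(d_i)c_i$ satisfies $\pi(e\ps 1)\ot e\ps 2=\pi(e\ps 2)\ot e\ps 1$, i.e. $e\in C^D$ by \eqref{TS}; thus $\xi$ does land in $C^D$.

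The step I expect to be the main obstacle is the well-definedness of $\xi^{-1}$, namely that for $c\in C^D$ the element $\pi(c\ps 1)\ot c\ps 2$ really solves $(\ast)$. Expanding both sides, using that $\pi$ is a coalgebra map, and reindexing by coassociativity, the condition becomes
\begin{equation*}
\pi(c\ps 2)\ot\pi(c\ps 3)\ot\pi(c\ps 1)\ot c\ps 4=\pi(c\ps 1)\ot\pi(c\ps 2)\ot\pi(c\ps 4)\ot c\ps 3. \tag{$\ast\ast$}
\end{equation*}
To prove $(\ast\ast)$ I would use the defining relation of $C^D$, which by \eqref{TS} can be written as $\pi(c\ps 1)\ot c\ps 2=\pi(c\ps 2)\ot c\ps 1$, in iterated form: applying $\D_C$ and $\pi$ repeatedly and invoking coassociativity shows that the single $C$-valued leg may be moved cyclically past the $\pi$-decorated legs, and a short manipulation of this kind carries the left-hand side of $(\ast\ast)$ onto the right-hand side. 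The only real difficulty here is the bookkeeping---tracking the $D\cop$ slot and the iterated Sweedler indices---so I would fix the two $D^e$-coactions at the outset and then let the counit computations above finish the argument; beyond \eqref{TS} and the coalgebra-map property of $\pi$, no further input is required.
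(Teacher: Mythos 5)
Your argument is correct, and it is worth stating plainly how it relates to the paper: the paper gives no proof of this lemma at all, deferring to \cite{hr2}[Remark 5.4], so your self-contained equalizer verification is a genuine addition rather than a paraphrase of anything in the text. The pieces all check out. With your (standard) conventions for the $D^e=D\ot D\cop$-comodule structures on $D$ and $C$, condition $(\ast)$ is the correct description of $D\Co_{D^e}C$; applying $\ve$ to legs $1,3$ of $(\ast)$ yields exactly $\xi^{-1}\circ\xi=\Id$, applying it to legs $1,2$ and comparing the two resulting identities shows $\xi$ lands in $C^D$, and $\xi\circ\xi^{-1}=\Id$ is the one-line counit computation you give. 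One implicit move deserves flagging: you replace the $\varphi\in D^*$ formulation \eqref{TS} by the tensor identity $c\ps{1}\ot\pi(c\ps{2})=c\ps{2}\ot\pi(c\ps{1})$, which uses that the ground ring is a field; the paper makes exactly the same move in the proof of Lemma \ref{lemma C^D}, so this is legitimate here. Finally, the step you leave as ``bookkeeping'' --- the verification of $(\ast\ast)$ for $c\in C^D$ --- does close, and in fact needs only a single application of the relation rather than an iterated cyclic shuffle: apply $\Id\ot\Delta^2$ to $\pi(c\ps{1})\ot c\ps{2}=\pi(c\ps{2})\ot c\ps{1}$ and then $\pi$ to the two middle legs, obtaining
\begin{equation*}
\pi(c\ps{1})\ot\pi(c\ps{2})\ot\pi(c\ps{3})\ot c\ps{4}=\pi(c\ps{4})\ot\pi(c\ps{1})\ot\pi(c\ps{2})\ot c\ps{3};
\end{equation*}
now apply to both sides the permutation $t_1\ot t_2\ot t_3\ot t_4\mapsto t_2\ot t_3\ot t_1\ot t_4$ of the three $D$-legs. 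The left-hand side becomes precisely the left-hand side of $(\ast\ast)$ and the right-hand side becomes its right-hand side. With that two-line completion, your proposal is a complete and correct proof of the lemma, and arguably a more useful one than the paper's bare citation, since it makes visible that nothing beyond \eqref{TS}, coassociativity, and the coalgebra-map property of $\pi$ is used.
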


\subsection{Cohomology theories related to  Hopf Galois coextensions}
In this subsection we show that the Hopf cyclic cohomology of the Hopf algebra involved in  the coextensions with coefficients in the SAYD module $C^D$  is isomorphic to the relative cyclic cohomology of coalgebra coextensions.

\medskip
\begin{definition}
Let $D$ be a coalgebra and let $M$ and $N$ be a $D$-bicomodules. The cyclic cotensor product of $M$ and $N$ is defined by
\begin{equation}
 M\wh{\Co}_D N :=(M \Co_D N) \Co_{D^e} D\cong D\Co_{D^e} (M \Co_D N).
\end{equation}
  The cyclic cotensor product also can be defined for a finite number of arbitrary $D$-bicomodules $M_1,\ldots,M_n$ as follows;
\begin{equation}
(M_1\wh{\Co}_d\ldots\wh{\Co}_D M_n):=(M_1\Co_D\ldots\Co_D M_n)\Co_{D^e}D.
\end{equation}
\end{definition}
One has
\begin{equation}
(M_1\wh{\Co}_d\ldots\wh{\Co}_D M_n)\cong (M_1{\Co}_d\ldots{\Co}_D M_n)^D.
\end{equation}
Let $\pi:C\twoheadrightarrow D$ be a coalgebra coextension and $M$ be a $C$-bicomodule. Therefore $M$ is a $D$-bicomodule by the following comodule structures;
\begin{equation}
  c\longmapsto \pi(c\ns{-1})\ot c\ns{0}, \quad c\longmapsto c\ns{0}\ot \pi(c\ns{1}).
\end{equation}

  For all $n\in \mathbb{N}$ and  $0\leq i\leq n$,
we define cofaces
\begin{equation}
d_i:M\wh{\Co}_DC^{\wh{\Co}_D n}\longrightarrow M\wh{\Co}_DC^{\wh{\Co}_D (n+1)},
\end{equation}
 which are given by
\begin{equation}\label{coface}
d_i(m\Box_D c_1 \Box_D \ldots \Box_D c_n )=
\begin{cases}
m\ns{0}\Box_D m\ns{1} \Box_D c_1 \Box_D \ldots  \Box_D c_n, & \text{$i=0$},\\
m\Box_D c_1 \Box_D \ldots \Box_D \D(c_i) \Box_D \ldots \Box_D  c_n ,& \text{$0 < i < n$},\\
m\ns{0} \Box_D c_1 \Box_D \ldots  \Box_D c_n \Box_D m\ns{-1}, & \text{$i=n$}.
\end{cases}
\end{equation}
Also for $0\leq i \leq n-1$ we define the codegeneracies
\begin{equation}\label{codeg}
s_i:M\wh{\Co}_DC^{\wh{\Co}_D n}\longrightarrow M\wh{\Co}_DC^{\wh{\Co}_D (n-1)},
\end{equation}
 which are given by
\begin{equation}
s_i(m\Box_D c_1 \Box_D \ldots \Box_D c_n ):= m\Box_D c_1 \Box_D \ldots \Box_D \ve(c_{i+1}) \Box_D \ldots  \Box_D c_n.
\end{equation}
One can easily check that for $0\leq i \leq n-1$, the cofaces $d_i$ are  well-defined. To show that the last coface is well-defined, one notes that the condition

$$m\ot c_1\ot \cdots \ot c_n\in (M\Co C{\Co}\cdots {\Co}C)^D,$$ implies
\begin{equation}\label{symmetry}
  \pi(m\ns{-1})\ot m\ns{0}\ot c_1\ot \cdots \ot c_n= \pi(c_n\ps{2})\ot m\ot c_1\ot \cdots c_{n-1}\ot c_n\ps{1}.
\end{equation}
The following computation proves that the last coface is well-defined.
\begin{align*}
  &m\ns{0}\ot c_1\ot \cdots \ot c_{n-1}\ot c_n\ps{1}\ot \pi(c_n\ps{2})\ot m\ns{-1} \\
  &=m\ns{0}\ns{0}\ot c_1\ot \cdots c_{n-1} \ot c_n\ot \pi(m\ns{-1})\ot m\ns{0}\ns{-1}\\
  &=m\ns{0}\ot c_1\ot \cdots c_{n-1}\ot\pi(m\ns{-1}\ps{1})\ot m\ns{-1}\ps{2}.
\end{align*}
We used \eqref{symmetry} in the first equality and the coassociativity of $C$-coaction of $M$ in the second equality.
\begin{theorem}\label{dual-thm}
Let $\pi:C\twoheadrightarrow D$ be a coalgebra coextension and $M$ be a $D$-bicomodule.

a) The module $C^n(C(D),M)=M\wh{\Co}_DC^{\wh{\Co}_Dn}$ is a cosimplicial object with the cofaces and codegeneracies  as defined in \eqref{coface} and \eqref{codeg}.

b)
Let $M=C$ and set
$$t:C\wh{\Co}_DC^{\wh{\Co}_Dn}\longrightarrow C\wh{\Co}_DC^{\wh{\Co}_Dn},$$ which is given by
\begin{equation}
 t_n( c_0\ot c_1\ot \ldots \ot c_n)=  c_1 \ot \ldots \ot c_n \ot c_0.
 \end{equation}
  Then $\left(C^n(C(D),C), d_i,s_i,t_n\right)$ is a cocyclic module.
\end{theorem}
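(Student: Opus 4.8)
The plan is to verify directly that the operators $d_i$, $s_i$, and $t_n$ satisfy the cosimplicial relations \eqref{rel1} and the cocyclic relations \eqref{rel2}. For part (a), I would first confirm the well-definedness issues that were flagged before the theorem: the interior cofaces $d_i$ for $0 < i < n$ only reshuffle via $\Delta$ and insert $\varepsilon$, so they visibly respect the cotensor relations over $D$, while the extremal cofaces $d_0$ and $d_n$ require the comodule-compatibility computation already carried out using \eqref{symmetry} and coassociativity. With well-definedness in hand, the cosimplicial identities among the $d_i$ and $s_i$ reduce to bookkeeping: coassociativity of $\Delta$ gives $d_j d_i = d_i d_{j-1}$ for the interior indices, counitality gives the mixed $s_j d_i$ relations, and the only genuinely non-formal checks are those involving $d_0$ and $d_n$, where one must slide the comodule structure maps $m \mapsto m\ns{0}\otimes m\ns{\pm 1}$ past $\Delta(c_i)$; these again follow from coassociativity of the $C$-coaction on $M=C$ together with \eqref{symmetry}.

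For part (b) the work is the cocyclic axioms \eqref{rel2} for $t_n$. Here $t_n$ is the plain cyclic rotation $c_0 \otimes \cdots \otimes c_n \mapsto c_1 \otimes \cdots \otimes c_n \otimes c_0$ on $C\wh{\Co}_D C^{\wh{\Co}_D n}$, so I would check each relation by tracking where a given tensor slot is sent. The identities $\tau \delta_i = \delta_{i-1}\tau$ for $1 \le i \le n+1$ and $\tau\sigma_i = \sigma_{i-1}\tau$ for $1 \le i \le n$ are immediate from the rotation on interior slots. The substantive relations are $\tau\delta_0 = \delta_{n+1}$, $\tau\sigma_0 = \sigma_n \tau^2$, and above all the periodicity $\tau^{n+1} = \Id$. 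The first of these is where the two extremal cofaces $d_0$ (which appends $m\ns{1}$ on the right after moving $m\ns{0}$ to the front) and $d_{n+1}$ (which appends $m\ns{-1}$ on the left) get identified after one rotation; establishing it uses the defining symmetry \eqref{symmetry} of elements of the cyclic cotensor product, exactly as in the displayed well-definedness computation, since on $M=C$ the two coactions are the two sides of $\Delta$ composed with $\pi$.

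I expect the main obstacle to be the periodicity relation $\tau^{n+1} = \Id$ together with the compatibility $\tau\delta_0 = \delta_{n+1}$, because these are the only places where the rotation genuinely interacts with the cotensor/coinvariant constraint rather than being a formal relabeling of factors. The point is that $t_n$ as written is a rotation of an $(n+1)$-fold string, so $t_n^{n+1}$ returns every slot to its original position as a map of tensors; what must be argued is that this rotation is a well-defined endomorphism of the subquotient $C\wh{\Co}_D C^{\wh{\Co}_D n} = (C\Co_D \cdots \Co_D C)\Co_{D^e} D$, i.e. that it preserves both the successive $\Co_D$ matching conditions and the outer $\Co_{D^e}D$ cyclic-invariance condition. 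Once one observes that the $\Co_{D^e}D$ factor precisely imposes the cyclic symmetry \eqref{symmetry} relating the first and last slots, the rotation is seen to permute the defining relations among themselves, so it descends and $t_n^{n+1}=\Id$ holds on the quotient. I would therefore organize part (b) by first proving that $t_n$ is well-defined on the cyclic cotensor product (this is the crux), after which the remaining cocyclic axioms fall out as direct slot-tracking verifications analogous to those for the dual cyclic module \eqref{dual-Hopf-cyclic-module}.
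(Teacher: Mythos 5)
Your verification plan is sound, but it is not the paper's route: the paper never checks the (co)simplicial and cocyclic axioms itself. It only carries out the well-definedness computation for the last coface via \eqref{symmetry}, and then disposes of the whole theorem with the remark that it is the dual statement of Theorem 1.5 of \cite{JS}, i.e.\ the proof is by dualizing Jara and Stefan's verification for the relative cyclic module $A\widehat{\ot}_B\cdots\widehat{\ot}_B A$ of an algebra extension. Your direct argument is therefore more self-contained, and you correctly isolate the one point that the dualization hides: the outer $\Co_{D^e}D$ factor imposes exactly the wrap-around cotensor condition \eqref{symmetry} linking the last slot to the first, so the rotation $t_n$ permutes the $n+1$ matching conditions cyclically; hence $t_n$ is well defined on $C\wh{\Co}_D C^{\wh{\Co}_D n}$ and $t_n^{n+1}=\Id$ is automatic, after which the remaining relations are slot-tracking. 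What the citation buys the paper is brevity and the structural point (duality with \cite{JS}); what your route buys is precisely the ``direct proof with Hopf-algebraic tools'' that the introduction advertises for the coextension side. Two small corrections to your write-up. First, the cyclic cotensor product is a \emph{subspace}, $(C\Co_D\cdots\Co_D C)^D\cong (C\Co_D\cdots\Co_D C)\Co_{D^e}D$, dual to the quotient $A\widehat{\ot}_B\cdots\widehat{\ot}_B A$ on the algebra side; so $t_n$ \emph{restricts} to it rather than ``descends to the quotient'' --- your phrase ``preserves the matching conditions'' is the correct formulation and the quotient language should be deleted. Second, with $M=C$ both extremal coactions are $\Delta$ itself, so $\tau\delta_0=\delta_{n+1}$ holds on the nose by inspection of \eqref{coface}: $t_{n+1}d_0(c_0\ot\cdots\ot c_n)=c_0\ps{2}\ot c_1\ot\cdots\ot c_n\ot c_0\ps{1}=d_{n+1}(c_0\ot\cdots\ot c_n)$; the relation \eqref{symmetry} is needed only for the well-definedness of $\delta_{n+1}$ (already done in the paper), not for this identity as you suggest.
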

The cyclic cohomology of the preceding cocyclic module is called relative cyclic cohomology of coalgebra coextension $C(D)$. The Theorem \ref{dual-thm} is the dual statement of \cite{JS}[Theorem 1.5]. In fact

\begin{equation}
  C^n(C(D),C)=       D\Co_{D^e} \underbrace{  C\Co_D\cdots \Co_D C}_{n+1 ~times}.
\end{equation}

For any Hopf Galois coextension  $C(D)^H$  the canonical map $\beta:C\ot H \longrightarrow C\Box_D C$ induces the following  bijection,
$$\beta^n: C\ot H^{\ot n}\longrightarrow  C^{\Box_D (n+1)}$$
$$c\ot h_1 \ot \dots \ot h_n \longmapsto c\ps{1} \ot c\ps{2} \triangleleft h_1\ps{1} \ot c\ps{3} \triangleleft h_1\ps{2} h_2\ps{1} \ot \dots \ot c\ps{n+1} \triangleleft h_1\ps{n}\dots h_{n-1}\ps{2}h_n.$$ By applying   $D\Co_{D^e}-$ on the both sides of the map $\b^{ n}$    we obtain the following  isomorphism of $C$-bicomodules;
\begin{equation*}
\vp :  C^D\ot H^{\ot n} \cong D\Co_{D^e} C\ot H^{\ot n} \longrightarrow D \Box_{D^e}C\Co_D C^{\Box_D n}\cong  C^{\Box_D n+1},
\end{equation*}
which is given by
\begin{align}\label{isom}
&c \ot h_1 \ot  \dots \ot  h_n \longmapsto  \nonumber \\
 &c\ps{1}\ot c\ps{2} \triangleleft h_1\ps{1} \ot c\ps{3}\triangleleft h_1\ps{2}h_2\ps{1} \ot \dots \ot c\ps{n}\triangleleft h_1\ps{n-1}\dots h_{n-1}\ps{1} \ot \nonumber\\
 &  ~~~~~~~~~~~~~~~~~~~ c\ps{n+1}\triangleleft h_1\ps{n}\dots h_{n-1}\ps{2} h_{n}.
\end{align}
The inverse map
$$\vp^{-1}:  C^{\Box_D n+1} \longrightarrow  C^D\ot H^{\ot n},$$
is given by
\begin{align}
&c_0 \ot \dots \ot c_n \longmapsto \nonumber \\
&c_0\ps{1} \ot \kappa(c_0\ps{2}\ot c_1\ps{1}) \ot \kappa(c_1\ps{2}\ot c_2\ps{1}) \ot \dots \ot \kappa(c_{n-2}\ps{2} \ot c_{n-1}\ps{1}) \ot \nonumber\\
&\kappa(c_{n-1}\ps{2}\ot c_n).
\end{align}

   By Theorem \ref{main01}, the subspace  $C^D$ is a right-left SAYD module over $H$. Using  the cocyclic module \eqref{dual-Hopf-cocyclic-module}, the Hopf cyclic cohomology of $H$ with coefficients in $C^D$ is computed by the following cocyclic module;
\begin{align}\nonumber
  &\delta_i(c\ot \widetilde{h})=  h_1\ot \cdots h_i\ot 1_H\ot h_{i+1}\ot \cdots h_n\ot c, \quad 0\leq i\leq n,\nonumber \\
  &\delta_{n+1}(c\ot \widetilde{h})=h_1\ps{1}\ot\cdots \ot h_n\ps{1}\ot S(h_1\ps{2}\cdots h_n\ps{2})\kappa(\overline{c\ps{3}\Co_D c\ps{1}}) \ot c\ps{2},\nonumber \\
  &\sigma_i(c\ot \widetilde{h})= h_1\ot \cdots\ot h_i h_{i+1}\ot \cdots \ot h_n\ot c,\nonumber \\
  &\sigma_n(c\ot \widetilde{h})=  h_1\ot \cdots \ot h_{n-1}\ve(h_n)\ot c,\nonumber \\
  &\tau_n(c\ot \widetilde{h})= h_2\ps{1}\ot\cdots\ot h_n\ps{1}\ot S(h_1\ps{2} \cdots h_n\ps{2})\kappa(\overline{c\ps{3}\Co_D c\ps{1}})\ot c\ps{2}\triangleleft h_1\ps{1},
\end{align}
where  $c\ot \widetilde{h}=c\ot h_1\ot \cdots \ot h_n$.   We denote the above cocyclic module by $C^n(H, C^D)$.
Now using the isomorphism $\vp$ given in \eqref{isom} we define the following map,
\begin{equation}
 \psi: C^n(H,C^D)\longrightarrow C^n(C(D),C),
\end{equation}
which is given by
\begin{align}\label{isom2}
& h_1  \dots \ot  h_n\ot c \longmapsto  \nonumber \\
 &c\ps{1}\ot c\ps{2} \triangleleft h_1\ps{1} \ot c\ps{3}\triangleleft h_1\ps{2}h_2\ps{1} \ot \dots \ot c\ps{n}\triangleleft h_1\ps{n-1}\dots h_{n-1}\ps{1} \ot \nonumber\\
 & c\ps{n+1}\triangleleft h_1\ps{n}\dots h_{n-1}\ps{2} h_{n},
\end{align}
with the inverse map;
\begin{align}
&c_0 \ot \dots \ot c_n \longmapsto \nonumber \\
& \kappa(c_0\ps{2}\ot c_1\ps{1}) \ot \kappa(c_1\ps{2}\ot c_2\ps{1}) \ot \dots \ot \kappa(c_{n-2}\ps{2} \ot c_{n-1}\ps{1}) \ot \nonumber\\
&\kappa(c_{n-1}\ps{2}\ot c_n)\ot c_0\ps{1}.
\end{align}
Now we are ready to state the main result.
\begin{theorem}
Let $C(D)^H$ be a Hopf Galois coextension.  The map $\psi$ introduced in \eqref{isom2} induces an isomorphism between the cocyclic modules $C^n(H,C^D)$ and $C^n(C(D),C)$ and we obtain
\begin{equation}
  HC^n(H,C^D)\cong HC^n(C(D),C).
\end{equation}
\end{theorem}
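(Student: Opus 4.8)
The plan is to prove that $\psi$ is an isomorphism of cocyclic modules and then appeal to the functoriality of cyclic cohomology: an isomorphism of cocyclic modules induces an isomorphism of the associated cyclic cohomology groups. Bijectivity of $\psi$ is already in hand, since $\psi$ is built from the $C$-bicomodule isomorphism $\vp$ of \eqref{isom} (equivalently from the bijection $\b^n$), and the explicit two-sided inverse written immediately before the statement exhibits $\psi^{-1}$ together with the fact that it lands in $C^D\ot H^{\ot n}$. Hence the genuine content of the theorem is the verification that $\psi$ intertwines the two families of structure operators, i.e. that $\psi\,\delta_i=d_i\,\psi$, $\psi\,\sigma_i=s_i\,\psi$ and $\psi\,\tau_n=t_n\,\psi$, where $\delta_i,\sigma_i,\tau_n$ denote the Hopf cocyclic operators on $C^n(H,C^D)$ and $d_i,s_i,t_n$ the relative cocyclic operators on $C^n(C(D),C)$ from Theorem \ref{dual-thm}.

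First I would dispose of the unproblematic operators. Under $\psi$ the inner Hopf cofaces $\delta_i$, which insert a $1_H$, correspond to the inner relative cofaces $d_i$, which comultiply a tensor leg by $\D(c_i)$; this matching is formal once one uses the right $H$-module coalgebra property of $C$ together with $c\lt 1_H=c$, so that splitting a leg with $\D$ on the $C$-side is exactly recorded by inserting a unit on the $H$-side in \eqref{isom2}. In the same way the codegeneracies $\sigma_i$, which multiply adjacent entries $h_ih_{i+1}$, and $\sigma_n$, which applies $\ve$ to $h_n$, correspond to the relative codegeneracies $s_i$, which apply $\ve$ to a copy of $C$; here one only uses the compatibility of $\lt$ with $\D$ and with $\ve$ expressing that $C$ is an $H$-module coalgebra. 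All of these checks are pure Sweedler bookkeeping starting from the definition of $\psi$.

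The substance lies in the two boundary operators that involve the antipode $S$, the map $\kappa$, and the left $H$-coaction of the SAYD module $C^D$ from \eqref{SAYDcoaction}: the antipode-twisted last coface $\delta_{n+1}$ and the cyclic operator $\tau_n$. For these I would compute $\psi\,\delta_{n+1}$ and $\psi\,\tau_n$ directly and recognize the outputs as, respectively, a boundary relative coface built from the $C$-bicomodule coaction of $m$ and the plain rotation $t_n(c_0\ot\cdots\ot c_n)=c_1\ot\cdots\ot c_n\ot c_0$. The essential inputs are the properties of $\kappa$ from Lemma \ref{kappa}: the twisting rules $\kappa(\ol{c\lt h\Co_D c'})=S(h)\kappa(\ol{c\Co_D c'})$ and $\kappa(\ol{c\Co_D c'})h=\kappa(\ol{c\Co_D c'\lt h})$, and the recovery identity $c\ps{1}\lt\kappa(\ol{c\ps{2}\Co_D c'})=\ve(c)c'$, used together with the $\b$-identities of Lemma \ref{canonicalbijection}, most notably (i), (ii) and (vii), and the anti-coalgebra property of $\kappa$. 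Morally $\kappa$ converts the $S$-weighted rotation on the Hopf side into the unweighted rotation on the coalgebra side, exactly as in the dual extension computation of \cite{JS}; the antipode factors are absorbed against the $\kappa$-factors precisely because $\kappa$ is an anti-coalgebra map, and the closing of the $\tau^{n+1}=\Id$ bookkeeping is what the stability $c\ns{0}\blacktriangleleft c\ns{-1}=c$ of Theorem \ref{main01} provides.

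I expect the identity $\psi\,\tau_n=t_n\,\psi$ to be the main obstacle. It simultaneously transports one tensor leg from one end to the other, introduces the full antipode factor $S(h_1\ps{2}\cdots h_n\ps{2})$ together with the coaction factor $\kappa(\ol{c\ps{3}\Co_D c\ps{1}})$, and acts on $c\ps{2}$ by $h_1\ps{1}$, so that matching it against the bare rotation $t_n$ forces one to unfold the inverse Galois map through Lemma \ref{canonicalbijection}(vii), absorb all the $H$-factors using the twisting rules of Lemma \ref{kappa}(ii)--(iii), collapse the surviving product with the recovery identity of Lemma \ref{kappa}(iv), and finally re-assemble the result inside the image of $\psi$ using the coaction formula \eqref{SAYDcoaction}. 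Once every cocyclic identity is in place, $\psi$ is an isomorphism of cocyclic modules and therefore induces isomorphisms on cyclic cohomology, yielding $HC^n(H,C^D)\cong HC^n(C(D),C)$ as claimed.
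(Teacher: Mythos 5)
Your proposal follows essentially the same route as the paper: it establishes bijectivity of $\psi$ from the Galois map $\vp$ and its explicit inverse, treats the inner cofaces and codegeneracies as routine Sweedler bookkeeping, and correctly isolates the identity $\psi\,\tau_n=t_n\,\psi$ as the crux, to be verified via the recovery identity of Lemma \ref{kappa}(iv) together with the twisting properties of $\kappa$ and antipode cancellation, with commutation with the last coface $\delta_{n+1}$ then following from the cocyclic relations --- which is exactly how the paper's proof proceeds. The only cosmetic difference is that you propose also computing $\psi\,\delta_{n+1}$ directly, whereas the paper deduces it from the cyclic operator; this changes nothing of substance.
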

\begin{proof}
It is straight forward and easy  to prove that the map $\psi$ commutes with all cofaces, except the last one, and also codegeneracies. Here we show that $\psi$ commutes with the cyclic operator and therefore with the last coface map.
\begin{align*}
  & t_n(\psi(h_1 \ot \dots \ot  h_n\ot c ))  \nonumber \\
 &=t_n(c\ps{1}\ot c\ps{2} \triangleleft h_1\ps{1} \ot c\ps{3}\triangleleft h_1\ps{2}h_2\ps{1} \ot \dots \ot c\ps{n}\triangleleft h_1\ps{n-1}\dots h_{n-1}\ps{1} \ot\\
 &~~~~~~~~~~~~~~~~~~~~~~~~~~~~~~~~~~~~~~~~~ c\ps{n+1}\triangleleft h_1\ps{n}\dots h_{n-1}\ps{2} h_{n})\\
 &= c\ps{2} \triangleleft h_1\ps{1} \ot c\ps{3}\triangleleft h_1\ps{2}h_2\ps{1} \ot \dots \ot c\ps{n}\triangleleft h_1\ps{n-1}\dots h_{n-1}\ps{1} \ot\\
 &~~~~~~~~~~~~~~~~~~~~~~~~~~~~~~~~~~~~~~~~~ c\ps{n+1}\triangleleft h_1\ps{n}\dots h_{n-1}\ps{2} h_{n}\ot c\ps{1}\\
 &=c\ps{2}h_1\ps{1}\ot c\ps{3}h_1\ps{2}h_2\ps{1}\ot \cdots \ot c\ps{n+1}h_1\ps{n}\cdots h_{n-1}\ps{1}h_n\ot \\
  &~~~~~~~~~~~~~~~~~c\ps{n+2}\triangleleft \kappa(\overline{c\ps{n+3}\Co_D c\ps{1}})\\
 &=c\ps{2}h_1\ps{1}\ot c\ps{3}h_1\ps{2}h_2\ps{1}\ot \cdots \ot c\ps{n+1}h_1\ps{n}\cdots h_n\ps{1} \ot\\
  &~~~~~~~~~~~~~~~c\ps{n+2}h_1\ps{n+1}h_2\ps{n}\cdots h_n\ps{2}S(h_n\ps{3})\cdots  S(h_1\ps{n+2})\kappa(\overline{c\ps{n+3}\Co c\ps{1}})\\
 &=c\ps{2}h_1\ps{1}\ot c\ps{3}h_1\ps{2}h_2\ps{1}\ot \cdots \ot c\ps{n+1}h_1\ps{n}\cdots h_n\ps{1}\ot \\
 &~~~~~~~~~~~~~~~~~~~~~c\ps{n+2}h_1\ps{n+1}h_2\ps{n}\cdots h_n\ps{2}S(h_1\ps{n+2}\cdots h_n\ps{3})\kappa(\overline{c\ps{n+3}\Co_D c\ps{1}})\\
 &= (c\ps{2}h_1\ps{1})\ps{1}\ot (c\ps{2}h_1\ps{1})\ps{2}h_2\ps{1}\ot \cdots \ot (c\ps{2}h_1\ps{1})\ps{n}h_2\ps{1}\ps{n-1}\cdots h_n\ps{1}\ps{1} \ot\\
 &~~~~~~~~~~~~~~~~~~~~ (c\ps{2}h_1\ps{1})\ps{n+1}h_2\ps{1}\ps{n}\cdots h_n\ps{1}\ps{2}S(h_1\ps{2}\cdots h_n\ps{2})\kappa(\overline{c\ps{3}\Co_D c\ps{1}})  \\
 &=\psi(h_2\ps{1}\ot\cdots\ot h_n\ps{1}\ot S(h_1\ps{2} \cdots h_n\ps{2})\kappa(\overline{c\ps{3}\Co_D c\ps{1}})\ot c\ps{2}\triangleleft h_1\ps{1})\\
 &=\psi\tau(h_1 \ot \dots \ot  h_n\ot c ).\\
\end{align*}
We used the Lemma \ref{kappa}(iv) in third equality.
\end{proof}
We summarize  the headlines of this section in the following remark which are the dual ones in Remark \ref{remark}.

\begin{remark}{\rm
For any Hopf Galois coextension $C(D)^H$, the following statements hold.
\begin{enumerate}
 \item [i)] The relative cocyclic module of coalgebra coextensions is a subspace which is given by,
 $$D\ot_{D^e}\underbrace{ C\Co_D \cdots \Co_D C}_{n+1~times}= C\widehat{\Co}_D\cdots \widehat{\Co}_D C= (C\Co_D\cdots \Co_D C)^D.$$
 \item[ii)] The objects $C\ot H$ and $C\Co_D C$ are isomorphic in the category of $^C\mathcal{M}_H$.
 \item [iii)] The quotient space  $C_D=\frac{C}{W}$ is a coalgebra.
 \item[iv)] The quotient space $(C\Co_D C)_D$ is a coalgebra.
 \item [v)] The quotient space  $C_D$ is a left $(C\Co_D C)_D$-comodule and a right $H$-comodule.
  \item [vi)] The subspace  $C^D$ is a right $(C\Co_D C)_D$-comodule and a left $H$-comodule.
 \item [vii)]The subspace $C^D\cong D\Co_{D^e} C$ is a right-left SAYD module over $H$.
 \item [viii)] $HC^*(H,C_D)\cong HC^*(C(D),C)$.
\end{enumerate}
}
\end{remark}




  \end{document}